\documentclass[dvipdfmx, a4paper, 11pt]{amsart}
\usepackage{amsmath,amsthm,amssymb,amsthm, amsfonts, mathrsfs}
\usepackage{tikz}
\usetikzlibrary{intersections,calc,arrows.meta}
%% for avoiding a bug
\makeatletter
 \def\@makefnmark{%
 \leavevmode
 \raise.9ex\hbox{\check@mathfonts
 \fontsize\sf@size\z@\normalfont%
 \@thefnmark}%
 }
 \makeatother
\textwidth=13.5cm
\oddsidemargin=13mm
\evensidemargin=12mm
\newcommand\diam{\operatorname{diam}}
\newcommand{\HH}{\mathbb{H}}
\newcommand{\N}{\mathbb{N}}

\newcommand{\R}{\mathbb{R}}

\newcommand{\LL}{\mathcal{L}}
\newcommand{\F}{\mathscr{F}}
\newcommand{\G}{\mathscr{G}}
\newcommand{\U}{\mathcal{U}}
\newcommand{\SO}{\mathcal{SO}}

\newcommand{\aaa}{\mathfrak{a}}
\newcommand{\bbb}{\mathfrak{b}}
\newcommand{\ccc}{\mathfrak{c}}
\theoremstyle{plain}
\newtheorem{theorem}{Theorem}[section]
\newtheorem{lemma}[theorem]{Lemma}
\newtheorem{proposition}[theorem]{Proposition}
\newtheorem{corollary}[theorem]{Corollary}
\theoremstyle{definition}
\newtheorem{definition}[theorem]{Definition}

\theoremstyle{remark}

\newcounter{cn}
\setcounter{cn}{1}
\title[Homomorphisms of the lattice of slowly oscillating functions]{Homomorphisms of the lattice of slowly oscillating functions on the half-line}
%\date{\today}
\author{Yutaka Iwamoto
%, ~\today
}
\address{Faculty of Fundamental Science, National Institute of Technology (KOSEN), Niihama College,
Niihama, 792-8580, Japan}
\email{y.iwamoto@niihama-nct.ac.jp}
\subjclass[2010]{46E05, 54C35}
\keywords{Slowly oscillating functions, uniformly continuous functions, lattice homomorphisms, Higson compactification, Samuel-Smirnov compactification}
%%%%%%%%%%%%%%%%%%%%%%%%%%%%%%%%%%%%%%%%%%%%%%%%%%%%%%%%%%%%%%%%%%%%%%%%%%%%%%%%%%%%%%%%%%%%%%%%
\begin{document}

\begin{abstract}
We study the space $H(\SO)$ of all homomorphisms of the vector lattice of all slowly oscillating functions on the half-line $\HH=[0,\infty)$.
In contrast to the case of homomorphisms of uniformly continuous functions, it is shown that a homomorphism in $H(\SO)$ which maps the unit to zero must be the zero-homomorphism.
Consequently, we show that the space $H(\SO)$ without the zero-homomorphism is homeomorphic to $\HH\times (0, \infty)$.
By describing a neighborhood base of the zero-homomorphism, we show that $H(\SO)$ is homeomorphic to the space $\HH\times (0, \infty)$ with one point added.
\end{abstract}

\maketitle

\section{Introduction}
The aim of this note is to describe the real-valued homomorphisms of the vector lattice of all slowly oscillating functions on the half-line $\HH=[0, \infty)$.

Slowly oscillating functions are used to define Higson compactifications \cite{Keesling}
 and are functions that appear frequently in coarse geometry.
By analyzing slowly oscillating functions on $\HH$, it follows that its Higson corona $\nu\HH$ is a non-metrizable indecomposable continuum.
Although this fact is topologically interesting in its own right, in the context of  geometric group theory, it is applied to characterize the number of ends of finitely generated groups by whether the components of its Higson corona are decomposable or not \cite{Iwa2}.

Let $\U$ be the vector lattice of all uniformly continuous functions on $\HH$ and $\U^{\ast}$
 the sublattice of bounded functions.
In \cite{CS}, F\'elix Cabello S\'anchez analyzed the space $H(\U)$ of all homomorphisms of $\U$ and gave a fine description of it as follows:
$H(\U)$ is homeomorphic to a quotient space\footnote{
  Detailed equivalence relations in the quotient space are not described here because they require preparation that is not needed in this note. See \cite{CS} for details.
}
 obtained from
  $[1,2]\times \beta \N \times (0,\infty)$
  with one point added, where $\beta \N$ denotes the Stone-\v Cech compactification of natural numbers.
Also, by considering $H(\U^{\ast})$, he gave a description of the Samuel-Smirnov compactification of $\HH$ (cf. \cite{FJCS}, \cite{Woods}).

Inspired by his work, we study the space $H(\SO)$ of all homomorphisms of the vector lattice of slowly oscillating functions on $\HH$.
In contrast to the case of homomorphisms of  uniformly continuous functions, it is shown that a homomorphism in $H(\SO)$ which maps the unit to zero must be the zero-homomorphism
(Proposition \ref{3-9}).
Consequently, we show that the space $H(\SO)$ without the zero-homomorphism is homeomorphic to $\HH\times (0, \infty)$.
By describing a neighborhood base of the zero-homomorphism, we show that $H(\SO)$ is homeomorphic to the space $\HH\times (0, \infty)$ with one point added
(Theorem \ref{3-10}).

\section{Preliminaries}
Throughout this note, $\HH$ denotes the half-line $[0, \infty)$ with the metric given by the absolute value $|x-y|,\ x,y\in \HH$, and $\N$ denotes the space of natural numbers with the subspace metric.
Also, $X=(X, d_X)$ is assumed to be a metric space.

Let $\LL\subset C(X)$ be a unital vector lattice,
that is, $\LL$ contains the unit $\mathbf{1}: X\to \R$.
The sublattice of all bounded functions of $\LL$ is denoted by $\LL^{\ast}$.
A function $\phi :\LL \to \R$ is called a {\it homomorphism} if it 
is a linear map preserving joins and meets, that is, $\phi$ satisfies
\begin{enumerate}
\item[(i)]  $\phi(f\vee g)=\phi(f)\vee \phi(g)$,~
 $\phi(f\wedge g)=\phi(f)\wedge \phi(g)$, and
\item [(ii)] $\phi (\lambda \cdot f+ \mu \cdot g)= \lambda\cdot\phi (f) +\mu \cdot\phi(g)$
\end{enumerate}
for all $f,g\in \LL,~\lambda, \mu \in \R$.
Note that (i) and (ii) implies
\begin{enumerate}
\item[(iii)] $\phi(|f|)=|\phi (f)|$ for all $f\in \LL$.
\end{enumerate}
Indeed, the formulation 
\[|f|=f\vee \mathbf{0}-f\wedge \mathbf{0}\]
implies that
\begin{align*}
  \phi(|f|)
  &=\phi(f)\vee \phi(\mathbf{0})-\phi(f)\wedge\phi(\mathbf{0})\\
  &=\phi(f)\vee \mathbf{0}-\phi(f)\wedge \mathbf{0}\\
  &=|\phi(f)|.
\end{align*}
Recall that join and meet induce a partial order $\leq$ on $H(\LL)$,
that is,
\[f\leq g \Longleftrightarrow f=f\wedge g\]
or equivalently,
\[f\leq g \Longleftrightarrow g=f\vee g.\]
Then (i) implies that
\begin{enumerate}
\item[(iv)] $\phi(f)\leq \phi(g)$ whenever $f\leq g$.
\end{enumerate}
Besides, (iii) implies that a homomorphism $\phi$ is {\it positive}, that is,
\begin{enumerate}
\item[(v)] $\phi(f)\geq 0$ whenever $f\in \LL$ satisfies $f\geq 0$.
\end{enumerate}
The set of all homomorphisms $\phi :\LL\to \R$ is denoted by $H(\LL)$.
Note that $H(\LL)$ is a subset of $\R^{\LL}$.
We consider the topology on $H(\LL)$ inherited from $\R^{\LL}$.
Hence, a basic neighborhood of $\phi \in H(\LL)$
is given by
\[V(\phi; f_1, \dots, f_n; \varepsilon)
=\{\varphi\in H(\LL) : |\varphi(f_i)-\phi(f_i)|<\varepsilon,~\forall i=1,\dots, n \},
\]
where $\varepsilon >0$ and $f_i \in \LL$, $i=1,\dots, n$.
Put
\[K(\LL)
=\{\phi \in H(\LL): \phi (\mathbf{1})=1\}.\]
Then it is easy to see that $K(\LL)\subset H(\LL)$, and $H(\LL)$ and $K (\LL)$ are closed subspaces of $\R^{\LL}$.
In particular, $H(\LL^{\ast})$ and $K (\LL^{\ast})$ are compact spaces.
Indeed,
they are closed subspaces of the Cartesian product
\[\prod_{f\in \LL^{\ast}}\bigl[\inf f, \sup f\bigr].\]

For each $x\in X$, let $\delta_x : \LL \to \R$ be the evaluation homomorphism defined by
$\delta_x (f)=f(x)$ for every $f\in \LL$.
We note that $\delta_x (\mathbf{1})=1$ for every $x\in X$,
 i.e., $\delta_{x}\in K(\LL)$.
Then define
\[\delta : X\to K(\LL)\]
by $\delta(x)=\delta_x$ for each $x\in X$.
When we treat $\LL^{\ast}$, consider the map
\[e_{\LL^{\ast}}: X\to \prod_{f\in \LL^{\ast}}\bigl[\inf f, \sup f\bigr],\]
defined by $e_{\LL^{\ast}}(x)=(f(x))_{f\in \LL^{\ast}}$ for every $x\in X$.
One should note that the two maps $\delta : X\to K(\LL^{\ast})\subset \R^{\LL}$
and $e_{\LL^{\ast}}: X\to \prod_{f\in \LL^{\ast}}[\inf f, \sup f]\subset \R^{\LL}$
are essentially the same correspondence.

A unital vector lattice $\LL\subset C(X)$ is said to {\it separate points and closed sets} in $X$ provided that, for each closed set $F\subset X$ and each point $p\in X\setminus F$,
there exists $f\in \LL$ such that $f(p)\not\in \mbox{cl}_{\mathbb{R}}\, f(F)$.
% \newpage

The following is a fundamental fact concerning $K(\LL)$
(see \cite[pp. 129--130]{Garrido-Jaramillo}, \cite[1.7 (j)]{PW}).

\begin{proposition}\label{2-1}
If $\LL$ separates points and closed sets in $X$, then $\delta : X\to K(\LL)$ is a dense topological embedding.
\end{proposition}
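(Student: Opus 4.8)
The plan is to verify, in turn, that $\delta$ is injective, that it is continuous, that it is open onto its image, and that its image is dense in $K(\LL)$; the first three are soft consequences of the separation hypothesis together with the continuity of the members of $\LL$, while the density statement is where the lattice structure is really used.

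For injectivity I would use that $X$, being metric, is $T_1$: given $x\neq y$, the set $\{y\}$ is closed and $x\notin\{y\}$, so there is $f\in\LL$ with $f(x)\notin\operatorname{cl}_{\R}f(\{y\})=\{f(y)\}$, i.e. $\delta_x(f)\neq\delta_y(f)$. Continuity is immediate, since the preimage under $\delta$ of a basic neighborhood $V(\delta_x;f_1,\dots,f_n;\varepsilon)$ is $\{y\in X:|f_i(y)-f_i(x)|<\varepsilon,\ i=1,\dots,n\}$, which is open in $X$ because each $f_i$ is continuous. To see that $\delta$ is open onto $\delta(X)$, I would fix an open set $U\subseteq X$ and a point $x\in U$ and apply the separating property to the closed set $F=X\setminus U$ and the point $x$: there are $f\in\LL$ and $\varepsilon>0$ with $(f(x)-\varepsilon,f(x)+\varepsilon)\cap f(F)=\emptyset$. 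Then whenever $\delta_y\in V(\delta_x;f;\varepsilon)$ one has $f(y)\notin f(F)$, hence $y\notin F$, i.e. $y\in U$; so $V(\delta_x;f;\varepsilon)\cap\delta(X)\subseteq\delta(U)$, showing that $\delta(U)$ is a neighborhood of $\delta_x$ in $\delta(X)$. As $x\in U$ was arbitrary, $\delta(U)$ is open in $\delta(X)$, and together with injectivity and continuity this makes $\delta\colon X\to K(\LL)$ a topological embedding.

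The remaining, and main, point is density of $\delta(X)$ in $K(\LL)$. Let $\phi\in K(\LL)$ and let $V(\phi;f_1,\dots,f_n;\varepsilon)$ be a basic neighborhood; I must produce $x\in X$ with $|f_i(x)-\phi(f_i)|<\varepsilon$ for all $i$. The trick is to bundle the finitely many constraints into a single lattice element: set $g=\bigvee_{i=1}^{n}|f_i-\phi(f_i)\mathbf{1}|\in\LL$. On one hand, linearity of $\phi$ together with $\phi(\mathbf{1})=1$ gives $\phi(f_i-\phi(f_i)\mathbf{1})=0$ for each $i$, so by property (iii) and the fact that $\phi$ preserves finite joins, $\phi(g)=\bigvee_{i=1}^{n}|\phi(f_i-\phi(f_i)\mathbf{1})|=0$. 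On the other hand, if no desired $x$ existed, then for every $x\in X$ some index $i$ would satisfy $|f_i(x)-\phi(f_i)|\geq\varepsilon$, hence $g\geq\varepsilon\mathbf{1}$ pointwise, and positivity of $\phi$ (property (v)) would force $\phi(g)\geq\varepsilon>0$, a contradiction. Hence such an $x$ exists, $\delta_x\in V(\phi;f_1,\dots,f_n;\varepsilon)$, and $\delta(X)$ is dense.

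I expect the density argument to be the only nonroutine step — specifically, the observation that one should replace the finitely many inequalities by the single function $g$, on which $\phi$ is forced to vanish, and then play the pointwise bound $g\geq\varepsilon\mathbf{1}$ against $\phi(g)=0$ using positivity. The embedding portion is a direct unwinding of the definitions, the separating hypothesis entering exactly once, with $F$ taken to be the complement of an open set.
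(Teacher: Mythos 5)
Your proof is correct and complete: the embedding part uses the separation hypothesis exactly where it is needed (with $F=X\setminus U$, and with $T_1$-ness of the metric space for injectivity), and the density argument — bundling the finitely many constraints into the single lattice element $g=\bigvee_i|f_i-\phi(f_i)\mathbf{1}|$, on which $\phi$ must vanish, and playing that against the pointwise bound $g\geq\varepsilon\mathbf{1}$ via positivity and $\phi(\mathbf{1})=1$ — is exactly the standard mechanism. Note that the paper itself offers no proof of this proposition, citing it as a known fact from Garrido--Jaramillo and Porter--Woods; your argument is in essence the proof found there, so there is nothing to flag beyond the observation that you have supplied what the paper leaves to the references.
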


Though $K(\LL)$ is not compact in general,
it can be considered as a realcompactification of $X$ by Proposition \ref{2-1}.
See \cite{Garrido-Jaramillo} for more information about realcompactifications.

Let $\U(X)$ denote the lattice of all real-valued uniformly continuous functions on $X$.
We write $\U$ (resp. $\U^{\ast}$) instead of $\U(\HH)$ (resp. $\U(\HH)^{\ast}$) for notational simplicity.
The family $\U^{\ast}(X)$ has a ring structure with respect to $\R$,
but $\U(X)$ does not.
Therefore, when considering unbounded vector lattices, we need to consider lattice homomorphisms instead of ring homomorphisms.

Let $\alpha X$ and $\gamma X$ be compactifications of $X$.
We say $\alpha X \succeq \gamma X$ provided that
there is a continuous map $f:\alpha X \to \gamma X$ such that $f|_{X}=\mbox{id}_X$.
If $\alpha X \preceq  \gamma X$ and $\alpha X \succeq \gamma X$
then we say that $\alpha X$ and $\gamma X$ are {\it equivalent compactifications} of $X$.
Of course, two equivalent compactifications of $X$ are homeomorphic.

It is easy to check that $\U^{\ast}(X)$ contains all constant maps, separates points from closed sets,
and is a closed subring of $C^{\ast} (X)$ with respect to the sup-metric,
i.e., $\U^{\ast}(X)$ is a {\it complete ring on functions} (see \cite[3.12.22(e)]{Eng}).
Hence, $\U^{\ast}(X)$ uniquely determines a compactification $u X$ of $X$
(see \cite[3.12.22 (e)]{Eng}, \cite[4.5]{PW}),
which is called the {\it Samuel-Smirnov compactification} of $X$
(see \cite{CS}, \cite{Woods}).
We note that $uX$ is equivalent to
 $K(\U^{\ast}(X))= \mbox{cl}_{\R^{\U^{\ast}(X)}}\delta (X)$
because of the equivalence of two maps $\delta :X\to K(\U^{\ast}(X))$
and $e_{\U^{\ast}(X)}: X\to \prod_{f\in \U^{\ast}(X)}\bigl[\inf f, \sup f\bigr]$.

Let $(X, d_X)$ be a metric space and let $B_{d_X}(x,r)$
 be the closed ball of radius $r$ centered at $x\in X$.
A metric $d_X$ on $X$ is called {\it proper} if $B_{d_X}(x, r)$ is compact
 for every $x\in X$ and $r>0$.

Let $(X,d_X )$ and $(Y,d_Y)$ be proper metric spaces.
A map $f: X\to Y$ is said to be {\it slowly oscillating}
 provided that, given $R>0$
  and $\varepsilon>0$, there exists a compact subset $K\subset X$
  such that 
\[\diam_{d_Y} f(B_{d_X}(x,R))<\varepsilon\]
for every $x\in X\setminus K$,
where $\diam_{d} A=\sup \{ d(x,y): x,y\in A\} $.
 Let $\SO(X)$ denote the lattice of
  all real-valued slowly oscillating continuous functions on a proper metric space $X$.
 The sublattice of all bounded functions of $\SO(X)$ is denoted by $\SO(X)^{\ast}$.
When $X=\HH$ we just write $\SO$ (resp. $\SO^{\ast}$) instead of $\SO(\HH)$ (resp. $\SO(\HH)^{\ast}$) for notational simplicity.
It is easy to check that $\SO^{\ast}(X)$ is
 a closed subring of $C^{\ast}(X)$ with respect to the sup-metric, 
namely, a complete ring on functions.
Hence, $\SO^{\ast}(X)$ uniquely determines a compactification $hX$ of $X$,
 which is called the {\it Higson compactification} of $X$.
The remainder $\nu X=hX \setminus X$ is called the {\it Higson corona} of $X$
 (cf. \cite{Roe}, \cite{Keesling}).
We note that $\nu X$ is compact and that $hX$ and $K(\SO^{\ast}(X))$ are equivalent compactifications of $X$.
\par

\begin{proposition}\label{2-2}
  If $(X, d)$ is a proper metric space, then $\SO(X)\subset \U(X)$.
\end{proposition}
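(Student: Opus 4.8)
The plan is to show directly that every $f\in\SO(X)$ is uniformly continuous on $X$, which is precisely the content of the inclusion $\SO(X)\subset\U(X)$. Fix $f\in\SO(X)$ and $\varepsilon>0$; the goal is to produce a single $\delta>0$ such that $|f(x)-f(y)|<\varepsilon$ whenever $d(x,y)<\delta$. The first move is to feed the parameters $R=1$ and $\varepsilon$ into the definition of \emph{slowly oscillating}, obtaining a compact set $K\subset X$ with $\diam f\bigl(B_d(x,1)\bigr)<\varepsilon$ for every $x\in X\setminus K$. Since $x$ and every $y$ with $d(x,y)\le 1$ both lie in $B_d(x,1)$, this already settles all pairs with $x\notin K$, with threshold $1$. (If $K=\emptyset$ the estimate is global and $\delta=1$ works, so I may assume $K\neq\emptyset$.)

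It then remains to control $f$ on and near $K$. I would set $C=\{x\in X:d(x,K)\le 1\}$ and check that $C$ is compact using properness: $K$, being compact, is bounded, so $K\subset B_d(x_0,M)$ for some $x_0\in X$ and $M>0$, whence (the infimum $d(\cdot,K)$ being attained on the compact set $K$) $C\subset B_d(x_0,M+1)$; moreover $C$ is closed, being the preimage of $[0,1]$ under the $1$-Lipschitz map $x\mapsto d(x,K)$, so $C$ is a closed subset of the compact ball $B_d(x_0,M+1)$ and hence compact. Since a continuous function on a compact metric space is uniformly continuous, $f|_C$ is uniformly continuous, so I can pick $\delta\in(0,1]$ with the property that $x,y\in C$ and $d(x,y)<\delta$ force $|f(x)-f(y)|<\varepsilon$.

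Finally I would verify that this $\delta$ works on all of $X$. Let $x,y\in X$ with $d(x,y)<\delta\le 1$. If $x\notin K$, then $x,y\in B_d(x,1)$, so $|f(x)-f(y)|\le\diam f\bigl(B_d(x,1)\bigr)<\varepsilon$ by the first step. If $x\in K$, then $x\in C$, and $d(y,K)\le d(y,x)<1$ gives $y\in C$ as well, so $|f(x)-f(y)|<\varepsilon$ by the choice of $\delta$. In either case the required inequality holds, hence $f$ is uniformly continuous and $\SO(X)\subset\U(X)$.

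The only step carrying real content — and the only place where properness is actually used — is the compactness of the closed $1$-neighbourhood $C$ of $K$; the remainder is the routine split into the ``far from $K$'' regime, governed by slow oscillation, and the ``near $K$'' regime, governed by uniform continuity on the compact set $C$. I do not anticipate a genuine obstacle, only the minor bookkeeping of keeping $\delta\le 1$ and disposing of the trivial case $K=\emptyset$.
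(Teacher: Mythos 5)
Your proof is correct and follows essentially the same route as the paper: handle pairs far from the exceptional compact set $K$ by slow oscillation with $R=1$, and use properness to make the closed $1$-neighbourhood of $K$ compact so that $f$ is uniformly continuous there. The only cosmetic difference is that you invoke the Heine--Cantor theorem on that compact neighbourhood, whereas the paper runs the Lebesgue-number argument explicitly.
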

\begin{proof}
  Let $f\in \SO(X)$.
  Given $\varepsilon>0$, there exists a compact subset $K\subset X$ such that
  $\diam f(B(x, 1))<\varepsilon$ whenever $x\in X\setminus K$.
  Put $K'=\mbox{cl}\, B(K,1)$.
  Since $X$ is a proper metric space, $K'$ is compact.
  Consider a family 
  \[\mathscr{U}=\{ f^{-1}(B(f(x), \varepsilon/2)): x\in K'\}.\]
  Since $K'\subset \bigcup \mathscr{U}$,
   we can take a Lebesgue number $\delta_0 >0$ of $\mathscr{U}$,
   that is, every $\delta_0$-neighborhood of $x\in K'$ is contained in some element of $\mathscr{U}$.
   Let $\delta=\min\{\delta_0, 1\}$.
  Then $d(x,y)<\delta$ implies that $x,y\in K'$ or $x,y\in X\setminus K$.
  Hence, $d(f(x), f(y))<\varepsilon$ whenever $d(x,y)<\delta$.
\end{proof}

%%%

\section{Homomorphisms of the lattice of slowly oscillating functions
 on the half-line}

We note that $f:\HH \to \R$ is a slowly oscillating function ($f\in \SO$)
 if and only if
 for every $R>0$ and $\varepsilon >0$ there exists $M>0$ such that
\[\mbox{diam}\, f([x, x+R])<\varepsilon \ \mbox{for every }x>M.\]

\indent
Let $\tau : \HH \to \R$ be the map defined by \[\tau (x)=x+1\] for every $x\in \HH$.
One should note that $\tau^{\alpha}\in \SO$ for every $0<\alpha<1$.
\par
For each $f\in \SO$, we consider the map $f_{\ast}: H(\SO)\to \R$
 defined by 
 \[f_{\ast}(\phi)=\phi (f) \]
 for every $\phi\in H(\SO)$. 
 Recall that a basic neighborhood of $\phi \in H(\SO)$ is of the form
 \[V(\phi; f_1, \dots, f_n; \varepsilon)
 =\{\varphi\in H(\SO) : |\varphi(f_i)-\phi(f_i)|<\varepsilon,~\forall i=1,\dots, n \},
 \]
 where $\varepsilon >0$ and $f_i \in \SO$, $i=1,\dots, n$.
Now it is easy to see that $f_{\ast}$ is continuous.

\begin{proposition}\label{3-1}
  $K(\SO )=\delta(\HH)$.
\end{proposition}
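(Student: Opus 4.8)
The statement $K(\SO) = \delta(\HH)$ asserts that every homomorphism $\phi$ with $\phi(\mathbf 1)=1$ is an evaluation $\delta_x$ at some finite point $x\in\HH$.  Since $\delta(\HH) \subseteq K(\SO)$ is immediate (each $\delta_x$ is a homomorphism fixing $\mathbf 1$), the content is the reverse inclusion.  The natural approach is to first identify, for a given $\phi\in K(\SO)$, a candidate point $x_\phi\in\HH$, and then to check $\phi = \delta_{x_\phi}$ on all of $\SO$.  The obvious candidate is $x_\phi := \phi(\iota)$, where $\iota(x)=x$ is the identity function — but $\iota\notin\SO$, so I cannot use it directly.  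Instead I would use the bounded slowly oscillating function $g(x) = \min\{x,1\}$ (or equivalently work with $\tau^\alpha$ for $0<\alpha<1$, as the paper highlights): from $\phi$ applied to such functions one extracts a real number and argues it lies in $[0,\infty)$, using positivity (property (v)) and the fact that $\phi(\mathbf 1)=1$ pins down the "scale."

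\emph{Step 1: localize $\phi$.}  I would first show $\phi$ cannot "escape to infinity," i.e. that there is a compact $[0,N]$ and a function witnessing that $\phi$ is supported near $[0,N]$.  Concretely: suppose for contradiction that for every $N$ there is $f_N\in\SO^\ast$ with $0\le f_N\le \mathbf 1$, $f_N\equiv 0$ on $[0,N]$, and $\phi(f_N)>0$.  One can try to combine such functions (exploiting that a slowly oscillating bump far out can be made to oscillate arbitrarily slowly) to build a single $h\in\SO$ with $\phi(h)$ forced to be both finite and $\ge n$ for all $n$ — a contradiction.  This is essentially the same mechanism behind Proposition \ref{3-9} cited in the introduction (a homomorphism sending $\mathbf 1$ to $0$ is zero), and I expect \textbf{this localization step to be the main obstacle}: one must be careful that the bump functions and their scaled sums stay slowly oscillating, which uses the half-line structure (on $\HH$, slow oscillation only constrains behaviour near $+\infty$, so finitely-supported-near-$\infty$ perturbations are harmless, but infinite combinations need a genuine estimate).

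\emph{Step 2: extract the point.}  Once $\phi$ is localized to some $[0,N]$, restrict attention to $\SO^\ast$.  By Proposition \ref{2-2}, $\SO^\ast \subseteq \U^\ast$, and $\SO^\ast$ contains all functions that are uniformly continuous with compact support together with the constants; in particular on $[0,N]$ the restrictions $\{f|_{[0,N]} : f\in\SO^\ast\}$ contain $C([0,N])$.  The homomorphism $\phi$ restricted to this sublattice is a lattice (indeed ring, after noting multiplication is recovered on the bounded part) homomorphism $C([0,N])\to\R$ fixing $\mathbf 1$, hence — by the classical description of $K(C(Z))$ for compact $Z$, equivalently Proposition \ref{2-1} applied to $\LL = C([0,N])$ — equals evaluation at a unique $x_\phi\in[0,N]$.

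\emph{Step 3: conclude on all of $\SO$.}  Finally, take an arbitrary $f\in\SO$ (possibly unbounded).  Write $f = (f\wedge n\cdot\mathbf 1)\vee(-n\cdot\mathbf 1)$ truncations $f_n$, which lie in $\SO^\ast$ and agree with $f$ on $[0,N]$ once $n \ge \sup_{[0,N]}|f|$.  Then $\phi(f_n) = f_n(x_\phi) = f(x_\phi)$ by Step 2, while $\phi(f_n)\to\phi(f)$ would follow if $\phi$ were continuous under truncation — which is where Step 1 is used again: the "tail" $f - f_n$ is dominated by functions of the type localized away from $[0,N]$, so $\phi(f-f_n)=0$, giving $\phi(f) = f(x_\phi) = \delta_{x_\phi}(f)$.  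Hence $\phi = \delta_{x_\phi}\in\delta(\HH)$, completing the reverse inclusion and the proof.
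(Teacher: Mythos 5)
Your overall direction (localize $\phi$, identify a point via the bounded part, then handle unbounded functions by truncation) is not the paper's route, and as written it has two genuine gaps, both concentrated exactly where you predicted trouble. First, the localization in Step 1 is only gestured at, and the suggested construction does not go through: the functions $f_N$ are handed to you by the contradiction hypothesis, so their rate of slow oscillation is not controlled relative to $N$; a pointwise supremum $h=\sup_k c_k f_{N_k}$ with $c_k=k/\phi(f_{N_k})\to\infty$ is continuous (locally only finitely many terms are active) but need not be slowly oscillating, since $c_k f_{N_k}$ can oscillate enormously just beyond $N_k$ before its own ``threshold'' kicks in. You cannot replace the $f_{N_k}$ by bumps of your own choosing without losing the lower bound $\phi(h)\ge k$, so the contradiction is not established. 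Second, even granting the strong localization (some $N$ with $\phi(f)=0$ for all \emph{bounded} $f$ vanishing on $[0,N]$), Step 3 does not follow: the tail $f-f_n$ of an unbounded $f\in\SO$ is unbounded, and positivity only gives $\phi(f-f_n)\ge 0$; killing unbounded functions supported away from $[0,N]$ is precisely the growth-comparison problem that the paper needs Propositions \ref{3-7}, \ref{3-8} and the Vanishing Criterion (Lemma \ref{3-5}) to solve in the $\phi(\mathbf 1)=0$ case, and nothing in your Steps 1--2 supplies it. (Step 2 itself is fine once well-definedness on $[0,N]$ is secured, since $\delta([0,N])$ is compact and dense in $K(C([0,N]))$ by Proposition \ref{2-1}.)

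The paper avoids all of this with a much shorter argument that you did not use: by Proposition \ref{2-1}, $\delta(\HH)$ is dense in $K(\SO)$, so any $\phi\in K(\SO)$ is a limit of a net $(\delta_{x_\alpha})$, and then $\phi(f)=\lim_\alpha f(x_\alpha)$ for \emph{every} $f\in\SO$ by continuity of $f_\ast$. Feeding in the single unbounded slowly oscillating function $\sqrt{\tau}$ forces $\lim_\alpha\sqrt{x_\alpha+1}$ to be a finite real number, hence $x_\alpha\to x_\phi:=\phi(\sqrt{\tau})^2-1\in\HH$, and continuity of each $f\in\SO$ (bounded or not) immediately gives $\phi=\delta_{x_\phi}$. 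In other words, the one unbounded function $\sqrt{\tau}$ does at a stroke both jobs your Steps 1 and 3 struggle with; if you want to salvage your approach, you should at least use it (e.g.\ $\sqrt{\tau}\ge\sqrt{N+1}\,f_N$ gives $\phi(f_N)\le\phi(\sqrt{\tau})/\sqrt{N+1}$), but the density argument renders the whole localization--truncation scheme unnecessary.
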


\begin{proof}
It is obvious that $\delta(\HH)\subset K(\SO )$.
We shall show that $K(\SO )\subset \delta(\HH)$.
Let $\phi\in K(\SO)$.
Note that $\delta(\HH)$ is dense in $K(\SO)$ by Proposition \ref{2-1}.
Thus we can take a net $(x_{\alpha})_{\alpha}$ in $\HH$ such that $(\delta_{x_{\alpha}})_{\alpha}$ converges to $\phi$.
For each $f\in \SO$,
 the net $(f_{\ast}(\delta_{x_{\alpha}}))_{\alpha}=(\delta_{x_{\alpha}}(f))_{\alpha}=(f(x_{\alpha}))_{\alpha}$ converges to $f_{\ast}(\phi)=\phi(f)$ because $f_{\ast}$ is continuous,
 that is,
 \[\phi (f)=\lim_{\alpha} f(x_{\alpha}).\]
Taking $f=\sqrt{\tau}\in \SO$, we have
\[ \phi(\sqrt{\tau})= \lim_{\alpha} \sqrt{x_{\alpha}+1}. \]
Put $x_{\phi}=(\phi (\sqrt{\tau}))^2 -1$.
Then we have $x_{\phi}=\lim_{\alpha}x_{\alpha}$.
Hence, we conclude that $\phi =\delta_{x_{\phi}}\in \delta(\HH)$,
 i.e., $K(\SO )\subset \delta(\HH)$.
\end{proof}

\begin{corollary}\label{3-2}
 For each $\phi\in H(\SO)$, $\phi(\mathbf{1} )>0$  if and only if 
 there exist $x_{\phi}\in \HH$ and $c>0$ such that
 $\phi =c\cdot\delta_{x_{\phi}}$. 
In particular, if $\phi(\mathbf{1} )>0$
 then the point $x_{\phi}\in\HH$ is uniquely determined.
\end{corollary}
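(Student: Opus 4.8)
The plan is to derive Corollary \ref{3-2} directly from Proposition \ref{3-1} by rescaling. Suppose first that $\phi(\mathbf{1})>0$. Since $\phi$ is linear, the normalized functional $\psi=\phi/\phi(\mathbf{1})$ is again linear, and since scaling by a positive constant commutes with $\vee$ and $\wedge$ (because $\lambda(a\vee b)=(\lambda a)\vee(\lambda b)$ and likewise for $\wedge$ when $\lambda>0$, which follows from property (iv)/monotonicity), $\psi$ is a homomorphism. Moreover $\psi(\mathbf{1})=1$, so $\psi\in K(\SO)$. By Proposition \ref{3-1}, $K(\SO)=\delta(\HH)$, hence $\psi=\delta_{x_\phi}$ for some $x_\phi\in\HH$. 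Setting $c=\phi(\mathbf{1})>0$ gives $\phi=c\cdot\delta_{x_\phi}$, which is the desired form.

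Conversely, if $\phi=c\cdot\delta_{x_\phi}$ with $c>0$, then $\phi(\mathbf{1})=c\cdot\delta_{x_\phi}(\mathbf{1})=c\cdot\mathbf{1}(x_\phi)=c>0$, so the equivalence holds. For the uniqueness claim, suppose $\phi=c\cdot\delta_{x}=c'\cdot\delta_{y}$ with $c,c'>0$. Evaluating at $\mathbf{1}$ forces $c=c'=\phi(\mathbf{1})$, and then $\delta_x=\delta_y$ as functionals on $\SO$. To conclude $x=y$, I would exhibit a function in $\SO$ that separates any two distinct points of $\HH$: the function $\sqrt{\tau}$ already used in the proof of Proposition \ref{3-1} works, since $x\mapsto\sqrt{x+1}$ is strictly increasing, so $\delta_x(\sqrt{\tau})=\delta_y(\sqrt{\tau})$ implies $x=y$. (Alternatively one can invoke that $\SO$ separates points and closed sets, so $\delta$ is injective by Proposition \ref{2-1}.)

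I expect the only point requiring a word of care is verifying that multiplication by a positive scalar preserves the lattice operations, so that $\psi=\phi/\phi(\mathbf{1})$ genuinely lies in $H(\SO)$ rather than merely being a positive linear functional; this is elementary but should be stated. Everything else is a routine consequence of Proposition \ref{3-1} together with the strict monotonicity of $\sqrt{\tau}$.
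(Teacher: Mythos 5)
Your proof is correct and follows essentially the same route as the paper: normalize to $\phi(\mathbf{1})^{-1}\cdot\phi\in K(\SO)$, apply Proposition \ref{3-1}, and establish uniqueness by evaluating at a strictly increasing member of $\SO$. In fact your choice of $\sqrt{\tau}$ for the uniqueness step is slightly more careful than the paper's use of $\tau$, since $\tau$ itself is not slowly oscillating while $\sqrt{\tau}\in\SO$ is.
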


\begin{proof}
  If $\phi (\mathbf{1})>0$ then $\phi(\mathbf{1})^{-1}\cdot\phi \in K(\SO)$.
By Proposition \ref{3-1}, there exists $x_{\phi}\in \HH$
 such that $\phi(\mathbf{1})^{-1}\cdot\phi =\delta_{x_{\phi}}$,
 i.e., $\phi =\phi(\mathbf{1})\cdot\delta_{x_{\phi}}$.
The reverse implication is trivial.
\par
Suppose that $\phi(\mathbf{1})>0$
 and $\phi=\phi(\mathbf{1})\cdot\delta_{s}=\phi(\mathbf{1})\cdot\delta_{t}$
 for some $s, t\in \HH$ then the equation
 $\phi(\tau)=\phi(\mathbf{1})\cdot(s+1)=\phi(\mathbf{1})\cdot(t+1)$
 implies that $s=t$.
\end{proof}

The following two lemmas are modifications of those stated in \cite[p. 418]{CS}.

\begin{lemma}\label{3-3}
Let $f \in \SO$ be a map such that $f\geq \mathbf{1}$.
   If there exits $\phi\in H(\SO)$ such that
   $\phi(f)=1$ and $\phi(\mathbf{1})=0$,
   then $\phi$ is contained in the closure of $\{ f(n)^{-1} \cdot \delta_n : n \in \N\}$
   in $H(\SO)$.
 \end{lemma}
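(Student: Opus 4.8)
The plan is to verify the closure condition directly: I will show that every basic neighborhood $V(\phi;g_1,\dots,g_k;\varepsilon)$ of $\phi$ contains some member $f(n)^{-1}\cdot\delta_n$ of the given set. Note first that $f\geq\mathbf 1$ forces $f(n)\geq 1>0$, so each $f(n)^{-1}\cdot\delta_n$ really is a homomorphism in $H(\SO)$; and since membership in every basic neighborhood of $\phi$ is precisely what it means for $\phi$ to lie in the closure, this will finish the proof.

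First I would normalize the test functions against $f$. Set $h_i=g_i-\phi(g_i)\cdot f\in\SO$; because $\phi(f)=1$, linearity gives $\phi(h_i)=\phi(g_i)-\phi(g_i)\phi(f)=0$. Put $h:=|h_1|\vee\cdots\vee|h_k|\in\SO$, which is nonnegative and, by properties (i) and (iii) of a homomorphism, satisfies $\phi(h)=0$. If some $n\in\N$ obeys $h(n)<\varepsilon\,f(n)$, then $|h_i(n)|\leq h(n)<\varepsilon f(n)$ for all $i$, and dividing by $f(n)\geq 1$ gives $|f(n)^{-1}g_i(n)-\phi(g_i)|<\varepsilon$, i.e. $f(n)^{-1}\cdot\delta_n\in V(\phi;g_1,\dots,g_k;\varepsilon)$. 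Thus the lemma reduces to the claim: for every $h\in\SO$ with $h\geq\mathbf 0$ and $\phi(h)=0$ and every $\varepsilon>0$, there is $n\in\N$ with $h(n)<\varepsilon\,f(n)$.

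I would establish this claim by contradiction, supposing $h(n)\geq\varepsilon\,f(n)$ for every $n\in\N$. Applying the $\SO$-characterization recalled at the start of this section (with $R=1$ and oscillation bound $1$) to both $h$ and $f$, choose $M>0$ such that $\diam h([x,x+1])<1$ and $\diam f([x,x+1])<1$ for all $x>M$. Then for real $x>M+1$ the integer $n=\lfloor x\rfloor$ satisfies $n>x-1>M$ and $x\in[n,n+1]$, whence $h(x)>h(n)-1\geq\varepsilon f(n)-1\geq\varepsilon\bigl(f(x)-1\bigr)-1=\varepsilon f(x)-(\varepsilon+1)$. Setting $C=\varepsilon+1+\sup_{x\in[0,M+1]}\varepsilon f(x)$, which is finite since $f$ is continuous on a compact interval, one checks that $h+C\cdot\mathbf 1\in\SO$ satisfies $h+C\cdot\mathbf 1\geq\varepsilon\,f$ on all of $\HH$: on $[0,M+1]$ because $h+C\geq C\geq\varepsilon f$ there, and on $(M+1,\infty)$ by the inequality just derived together with $C\geq\varepsilon+1$. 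Property (iv) now gives $\phi(h)+C\,\phi(\mathbf 1)=\phi(h+C\cdot\mathbf 1)\geq\varepsilon\,\phi(f)=\varepsilon$, whereas $\phi(h)=\phi(\mathbf 1)=0$ makes the left side $0$; this contradiction proves the claim.

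I expect the only genuinely delicate point to be this last step — transferring the estimate $h(n)\geq\varepsilon f(n)$ from $\N$ to all of $\HH$. Slow oscillation of $h$ and $f$ is exactly what permits the transfer, at the price of an additive constant, and that constant is harmless precisely because $\phi$ kills $\mathbf 1$; everything else is routine manipulation of the axioms (i)--(iv).
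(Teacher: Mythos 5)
Your proof is correct and follows essentially the same route as the paper's: both arguments come down to the same auxiliary function $\bigvee_{i}\left|g_i-\phi(g_i)\cdot f\right|$, the observation that excluding every $f(n)^{-1}\cdot\delta_n$ from a basic neighborhood forces this function to dominate $\varepsilon\cdot f$ on $\N$, and a contradiction obtained by applying $\phi$ after noting that the discrepancy between this integer-sampled estimate and a global one is bounded and hence annihilated because $\phi(\mathbf{1})=0$. The only difference is bookkeeping: the paper truncates via $0\wedge\bigl(\bigvee_i|c_i\cdot f-g_i|-\varepsilon\cdot f\bigr)$, which vanishes on $\N$ and is bounded by uniform continuity, whereas you transfer the estimate from $\N$ to all of $\HH$ with an explicit oscillation bound and an additive constant $C$ -- a cosmetic variation, not a different method.
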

 
 \begin{proof}
   Suppose that there exists $\phi\in H(\SO)$ such that
   $\phi(f)=1$ and $\phi(\mathbf{1})=0$ but which is not contained in the closure of
    $\{ f(n)^{-1}\cdot \delta_n : n \in \N\}$
    in $H(\SO)$.
   Then there exist $\varepsilon >0$ and $g_1,\dots, g_k \in\SO$
   such that $f(n)^{-1} \cdot \delta_n \not\in V(\phi ; g_1, \dots, g_k; \varepsilon) $
   for every $n\in \N$.
So, for each $n\in \N$, 
 there exists $i\in \{1,\dots, k\}$ such that
   \[\left| \phi (g_i )-f(n)^{-1} \cdot g_i(n)\right|\geq \varepsilon. \]
Hence, we have
   \[\bigvee_{i=1}^{k} \left| \phi (g_i)\cdot f(n) -g_i (n) \right| \geq \varepsilon \cdot f(n)\]
   for every $n\in \N$.
Let $c_i =\phi(g_i)$ for each $i=1,\dots, k$.
Put
   \[h=0\wedge \left(\bigvee_{i=1}^{k} \left|c_i \cdot f -g_i \right|-\varepsilon \cdot f \right).\]
Then $h\in \SO\subset \U$ and $h(n)=0$ for every $n\in \N$.
It follows from uniformity that $h$ is a bounded function.
So, there exists $c>0$ such that $|h|\leq c\cdot \mathbf{1}$.
Thus, we have 
$|\phi(h)|=\phi(|h|)\leq c\cdot \phi (\mathbf{1})=0$, i.e., $\phi(h)=0$.
 We note that 
 \[ \bigvee_{i=1}^{k} \left| c_i \cdot  f -g_i \right|
  \geq h+\varepsilon \cdot f\]
 and
 \[\phi\left(\bigvee_{i=1}^{k} \left|c_i \cdot  f  -g_i \right|\right)
 =\bigvee_{i=1}^{k} \left|c_i \cdot \phi(f)  -\phi(g_i) \right|
 =0.\]
 However, we have $\phi(h+\varepsilon \cdot f )=\phi(h)+\varepsilon \cdot  \phi(f)=\varepsilon>0$, a contradiction.
 \end{proof}

 Let $\F$ be an ultrafilter on $\N_{0}=\N\cup\{0 \}$.
 Then we define the operation $\lim_{\F(n)}$ by
 \[\lim_{\F(n)} f(n)=\bigcap_{F\in \F} \mbox{cl}\left\{ f(n): n\in F\right\}  \]
 for every $f\in C(\HH)$ (cf. \cite{CS}).
If $f\in C(\HH)$ is a map such that
   $\displaystyle\lim_{\F(n)} f(n)\neq \emptyset$
   then the set $\displaystyle\lim_{\F(n)} f(n)$ consists of a single point
   since $\F$ is an ultrafilter.
\par
Recall that the Stone-\v{C}ech compactification $\beta\N_{0}$
 of $\N_{0}$ can be considered as the space of all ultrafilters on $\N_{0}$.

 \begin{lemma}\label{3-4}
Let $f \in \SO$ be a map such that $f\geq \mathbf{1}$.
Suppose that there exists a homomorphism  $\phi\in H(\SO)$ such that
    $\phi( f)=1$ and $\phi(\mathbf{1})=0$.    
Then there exists a free ultrafilter $\F$ such that
 the function $\phi_{\F}^{f} : \SO \to \R$ defined by
\[\phi_{\F}^{f}(g) =\lim_{\F(n)} \frac{g(n)}{ f(n)}  , ~~~(g\in \SO)\]
is a well-defined homomorphism that fulfils $\phi_{\F}^{f} =\phi$.
\end{lemma}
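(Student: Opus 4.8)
The plan is to promote the net--theoretic statement delivered by Lemma~\ref{3-3} to a statement about a single ultrafilter. By Lemma~\ref{3-3} the hypotheses $\phi(f)=1$ and $\phi(\mathbf{1})=0$ put $\phi$ in the closure of $\{f(n)^{-1}\cdot\delta_n : n\in\N\}$ inside $H(\SO)$, and since this space carries the topology inherited from $\R^{\SO}$, there is a net $(n_{\alpha})_{\alpha\in D}$ in $\N$ with $f(n_{\alpha})^{-1}\cdot\delta_{n_{\alpha}}\to\phi$. Evaluating coordinatewise, this is precisely
\[\phi(g)=\lim_{\alpha}\frac{g(n_{\alpha})}{f(n_{\alpha})}\qquad\text{for every }g\in\SO .\]

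First I would produce $\F$ from the tails of this net. For $\alpha_0\in D$ set $T_{\alpha_0}=\{\,n_{\alpha}:\alpha\ge\alpha_0\,\}\subseteq\N\subseteq\N_0$; directedness of $D$ makes $\{T_{\alpha_0}:\alpha_0\in D\}$ a base of a filter of nonempty sets, so it extends to an ultrafilter $\F$ on $\N_0$ (note $\N\in\F$). Next I would check that $\phi^{f}_{\F}$ is well defined and equals $\phi$: given $g\in\SO$ and $\varepsilon>0$, the displayed limit yields $\alpha_0\in D$ with $|g(n_{\alpha})/f(n_{\alpha})-\phi(g)|<\varepsilon$ for all $\alpha\ge\alpha_0$, hence $\{g(n)/f(n):n\in T_{\alpha_0}\}\subseteq(\phi(g)-\varepsilon,\phi(g)+\varepsilon)$; since $T_{\alpha_0}\in\F$, the definition of $\lim_{\F(n)}$ forces $\lim_{\F(n)} g(n)/f(n)\subseteq[\phi(g)-\varepsilon,\phi(g)+\varepsilon]$, and letting $\varepsilon\to0$ gives $\lim_{\F(n)} g(n)/f(n)=\{\phi(g)\}$. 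In particular this set is a nonempty singleton for every $g$, so $\phi^{f}_{\F}:\SO\to\R$ is a genuine map and $\phi^{f}_{\F}=\phi$; being literally equal to $\phi$ it is automatically a homomorphism, so no lattice or linearity identity has to be rechecked. Finally $\F$ is free: were it principal at some $m\in\N_0$, then $\phi(\mathbf{1})=\phi^{f}_{\F}(\mathbf{1})=1/f(m)>0$ because $f\ge\mathbf{1}$, contradicting $\phi(\mathbf{1})=0$.

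I expect the one genuinely delicate point to be the existence of the limits $\lim_{\F(n)} g(n)/f(n)$ for \emph{all} $g\in\SO$ simultaneously --- including unbounded $g$, for which $g/f$ need not be bounded on $\N$, so an arbitrary ultrafilter might push $g(n)/f(n)$ off to infinity and make the intersection empty. The reason for building $\F$ from the tails of the convergent net, rather than from an abstract compactness argument, is exactly that convergence of the net already traps $g(n_{\alpha})/f(n_{\alpha})$ in a neighbourhood of $\phi(g)$ on a member $T_{\alpha_0}$ of $\F$; everything else is bookkeeping.
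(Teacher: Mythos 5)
Your argument is correct and follows essentially the same route as the paper: where you take a net $f(n_{\alpha})^{-1}\cdot\delta_{n_{\alpha}}\to\phi$ and extend its tail filter to an ultrafilter, the paper extends the filter of trace sets $N_{V}=\{n\in\N : f(n)^{-1}\cdot\delta_{n}\in V\}$ over neighborhoods $V$ of $\phi$ (the same data, since your tails refine exactly these sets), and the identification $\phi_{\F}^{f}=\phi$ and the freeness of $\F$ are then verified just as you do. The only step you leave implicit is that the trapping of $\{g(n)/f(n): n\in T_{\alpha_0}\}$ in a bounded interval yields nonemptiness of the intersection defining $\lim_{\F(n)} g(n)/f(n)$ (the paper gets this from compactness of $B(\phi(g),\varepsilon)$ plus the finite intersection property of the closures over members of $\F$); this is routine and not a gap.
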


\begin{proof}
  Suppose that there exists a homomorphism  $\phi\in H(\SO)$ such that
    $\phi( f)=1$ and $\phi(\mathbf{1})=0$ for some
    $ f\in \SO$ with $f\geq \mathbf{1}$.
  For each neighborhood $V$ of $\phi$ in $H(\SO)$,
  let 
  $N_{V}=\left\{ n\in \N :  f(n)^{-1} \cdot \delta_n \in V  \right\}$.
  Then $N_{V}\neq \emptyset$ by Lemma \ref{3-3}.
  Put \[\G =\{N_{V} : V~\mbox{is a neighborhood of}~\phi\}.\]
Then $\G$ becomes a filter on $\N$.
Let $\F$ be an ultrafilter on $\N$ refining $\G$.
We note that $\F$ must be a free ultrafilter since $\phi(\mathbf{1})=0$.
Indeed, if $\G$ is a fixed ultrafilter, say $\lim \G=n_0$, then $\phi = f(n_0)^{-1}\delta_{n_0}$.
Hence, we have $\phi(\mathbf{1})=f(n_0)^{-1}\neq 0$, a contradiction.
\par

Given $\varepsilon>0$ and $g\in \SO$, we consider a neighborhood
 $V_{\varepsilon}=V(\phi;g;\varepsilon)$ of $\phi$ in $H(\SO)$, that is,
\[V_{\varepsilon}
=\{\varphi\in H(\SO) : |\varphi(g)-\phi(g)|<\varepsilon \}.\]
Since $ \left\{ 
    n\in \N :  f(n)^{-1} \cdot \delta_n \in V_{\varepsilon}
   \right\} \in \G\subset \F$,
  we have
   \[\phi_{\F}^{f}(g)
   =\bigcap_{F\in \F} \mbox{cl}\left\{ \frac{g(n)}{ f(n)} : n\in F\right\}
   \subset
   \bigcap_{G\in \G} \mbox{cl}\left\{ \frac{g(n)}{ f(n)} : n\in G\right\}\subset
    B(\phi(g), \varepsilon).\]
Then  $\phi_{\F}^{f}(g)$ is not empty by the compactness of $B(\phi(g), \varepsilon)$
 and it is uniquely determined because $\F$ is an ultrafilter.
Since $\varepsilon$ is arbitrary, it follows that  $\phi_{\F}^{f}(g)=\phi(g)$,
 that is, $\phi_{\F}^{f}$ is a well-defined homomorphism that fulfils $\phi_{\F}^{f} =\phi$.
\end{proof}

The following lemma is the key to this note, as it implies that a homomorphism in $H(\SO)$ which maps the unit to zero must be the zero-homomorphism (Proposition \ref{3-9}). Using this fact, we will derive our main result (Theorem \ref{3-10}).

 \begin{lemma}[Vanishing Criterion]\label{3-5}
  Let $\phi \in H(\SO)$.
If there are two maps $f, g \in \SO$ such that $\mathbf{1}\leq  f\leq g$
and
$\displaystyle\lim_{n\to \infty}f(n)^{-1} \cdot g(n)=\infty$,
then the condition $\phi(\mathbf{1})=0$ implies that $\phi (f)=0$.
\end{lemma}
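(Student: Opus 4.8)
The plan is to argue by contradiction, assuming $\phi(\mathbf{1})=0$ but $\phi(f)\neq 0$, and then to exploit Lemma \ref{3-4} to represent $\phi$ as an $\F$-limit along a free ultrafilter, after which the divergence hypothesis $f(n)^{-1}g(n)\to\infty$ forces a contradiction. First I would reduce to the case $\phi(f)>0$: since $f\geq\mathbf{1}\geq 0$ we know $\phi(f)\geq 0$ by positivity (v), so if $\phi(f)\neq 0$ then $\phi(f)>0$, and after rescaling $\phi$ by the constant $\phi(f)^{-1}>0$ (which does not affect $\phi(\mathbf{1})=0$, and only rescales $\phi(f)$ to $1$) we may assume $\phi(f)=1$. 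Now Lemma \ref{3-4} applies to the pair $(\phi,f)$: there is a free ultrafilter $\F$ on $\N$ with
\[
\phi(h)=\lim_{\F(n)}\frac{h(n)}{f(n)}\qquad\text{for all }h\in\SO.
\]

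Next I would apply this representation to $h=g$. Since $\mathbf{1}\leq f\leq g$ we have $g\in\SO$ and $g(n)/f(n)\geq 1$ for all $n$, so $\phi(g)=\lim_{\F(n)} g(n)/f(n)\geq 1$; in particular $\phi(g)$ is a finite real number. On the other hand, the hypothesis $\lim_{n\to\infty} f(n)^{-1}g(n)=\infty$ says that for every real $M>0$ the set $\{n\in\N : g(n)/f(n)\geq M\}$ is cofinite in $\N$, hence belongs to every free ultrafilter, in particular to $\F$. Therefore every element of $\lim_{\F(n)} g(n)/f(n)=\bigcap_{F\in\F}\operatorname{cl}\{g(n)/f(n):n\in F\}$ must be $\geq M$ for every $M$, which is impossible for a single real number. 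Thus $\lim_{\F(n)} g(n)/f(n)$ cannot be a (finite) real number, contradicting that $\phi(g)$ is a well-defined element of $\R$. This contradiction shows $\phi(f)=0$.

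There is one gap to close carefully: I must check that $g\in\SO$, so that $\phi(g)$ is defined at all, and that the ratio $g/f$ makes sense pointwise — the latter is immediate since $f\geq\mathbf{1}$ means $f(n)\geq 1>0$ everywhere. That $g\in\SO$ is given directly in the hypothesis, so no work is needed there. The one genuine subtlety — and the step I expect to be the main obstacle — is making the ``$\F$-limit of an unbounded sequence is empty'' argument airtight using the definition $\lim_{\F(n)}\psi(n)=\bigcap_{F\in\F}\operatorname{cl}\{\psi(n):n\in F\}$ given in the excerpt: I need to observe that if this intersection contained a real number $r$, then taking $M=r+1$, the cofinite (hence $\F$-large) set $F_M=\{n:\psi(n)\geq M\}$ would force $r\in\operatorname{cl}\{\psi(n):n\in F_M\}\subset[M,\infty)$, i.e. $r\geq r+1$, absurd. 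So the intersection is empty, whereas Lemma \ref{3-4} guarantees $\phi_{\F}^{f}(g)=\phi(g)\in\R$ is a genuine point — contradiction. This pins down $\phi(f)=0$ and completes the proof.
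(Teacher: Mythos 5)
Your proposal is correct and follows essentially the same route as the paper: rescale so that $\phi(f)=1$, invoke Lemma \ref{3-4} to write $\phi$ as $\phi_{\F}^{f}$ for a free ultrafilter $\F$, and then use $g(n)/f(n)\to\infty$ to see that $\phi(g)=\lim_{\F(n)}g(n)/f(n)$ cannot be a real number. Your explicit verification that the $\F$-limit of an unbounded-along-cofinite-sets sequence is empty is a careful spelling-out of what the paper compresses into ``$\phi(g)=\infty$, a contradiction.''
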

    
\begin{proof}
  Let $f, g \in \SO$ be such that $\mathbf{1}\leq  f\leq g$
  and
  $\lim_{n\to \infty}f(n)^{-1} \cdot g(n)=\infty$.
  Let $\phi\in H(\SO)$ be such that $\phi (\mathbf{1})=0$.
  Suppose that $\phi(f)\neq 0$.
  Replacing $\phi$ by $\phi(f)^{-1} \cdot \phi$,
   we may assume that $\phi (f)=1$.
  Then, by Lemma \ref{3-4}, there exists a free ultrafilter $\F$ such that
  $\phi=\phi_{\F}^{f}$.
  However,  since $\F$ is a free ultrafilter, we have
 \[\phi (g)=\phi_{\F}^{f}(g) =\lim_{\F(n)} \frac{g(n)}{ f(n)} =\infty , \]
 a contradiction.
\end{proof}

\begin{definition}\label{3-6}
  A sequence $\aaa=(a_n)\subset \N$ is called a {\it strictly increasing sequence} provided that $a_n <a_{n+1}$ for every $n\in \N$.
  Note that if $\aaa=(a_n)$ is a strictly increasing sequence then $\displaystyle\lim_{n\to \infty}a_n =\infty$
   since $\aaa \subset \N$.
  \par
  Let $\aaa$ be a strictly increasing sequence.
  Let $\eta_{\aaa}^{0}=\tau: \HH \to \R$. 
  Suppose that $\eta_{\aaa}^{n-1}$ has been defined
   for $n\geq 1$.
  Then we define $\eta_{\aaa}^n : \HH \to \R$ by
    \[\eta_{\aaa}^{n}(x)=\left\{
    \begin{array}{ll}
      \eta_{\aaa}^{n-1}(x),& 0\leq x< a_{n} ,\\[1mm]
      \eta_{\aaa}^{n-1}(a_{n})+\frac{1}{n}\left(x-a_{n}\right), & a_{n}\leq x,
    \end{array}
    \right.
    \]
    for every $x\in \HH$ (see Figure \ref{fig1}).
    \begin{figure}[h]
      \centering
      \begin{tikzpicture}
      \draw(0.1,0.1)node[below left]{$0$};
      \draw(0,0.5)node[left]{$1$};
      \coordinate[label=left:](A)at(0,0.5);%点A
      \coordinate[label=right:](B)at(2,2.5);%点B
      \coordinate[label=above:](C)at(4,3.5);%点C
      \foreach\P in{A,B,C}\fill[black](\P)circle(0.06);  %点A,B,Cに黒丸
      \draw[-Stealth](0,0)--(8,0);
      \draw[-Stealth](0,0)--(0,6);
      \draw[densely dotted](0,0.5)--(8,0.5);
      \draw[dashed](0,0.5)--(5.5,6);
      \draw(4.5,4.8)node[above left]{$\eta_{\aaa}^{0} =\tau$};%
      \draw(2,0)node[below]{$a_1$};
      \draw(4,0)node[below]{$a_2$};
      \draw[densely dotted](2,0)--(2,2.5);
      \draw[densely dotted](4,0)--(4,4.5);
      \draw[thin](0,0.5)--(2,2.5)--(8,5.5);
      \draw(6,4.5)node[above left]{$\eta_{\aaa}^{1}$};%
      \draw[ultra thick](0,0.5)--(2,2.5)--(4,3.5)--(8,4.83);
      \draw(7.4,4.5)node[below left]{$\eta_{\aaa}^{2}$};%
      \end{tikzpicture}
      \caption{The graphs of $\eta_{\aaa}^{0}$, $\eta_{\aaa}^{1}$ and $\eta_{\aaa}^{2}$.}\label{fig1}
    \end{figure}
    Note that $\eta_{\aaa}^{n-1}\geq \eta_{\aaa}^{n} \geq 1$ for every $n\in \N$.
    \par
    We define $\eta_{\aaa}:\HH \to \R$ by
      \[\eta_{\aaa} (x)= \lim_{n\to \infty} \eta_{\aaa}^{n} (x)\]
      for every $x\in \HH$.
    We note that if $x\leq a_n$ then
    \[\eta_{\aaa}(x)=\eta_{\aaa}^{n}(x)=\eta_{\aaa}^{n-1}(x).\]
    It is easy to see that $\eta_{\aaa} :\HH\to \R$ is a well-defined slowly oscillating continuous function  such that $\eta_{\aaa}\geq 1$.
    We call $\eta_{\aaa}$ the {\it slowly oscillating function with respect to $\aaa$}.
    \end{definition}

    \begin{proposition}\label{3-7}
      For each $f\in \SO$, there exists a strictly increasing sequence
       $\aaa\subset\N$ and
      $L>0$ such that $|f|\leq L\cdot \eta_{\aaa}$.
    \end{proposition}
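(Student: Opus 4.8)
\emph{Proof proposal.} The plan is to pass to a nonnegative function, exploit the fact that slowly oscillating functions grow strictly sublinearly, and then build $\aaa$ by a greedy induction along a ``diagonal'' schedule. First I would replace $f$ by $g:=|f|\vee\mathbf{1}$. Since the lattice operations preserve slow oscillation, $g\in\SO$; also $g\ge\mathbf{1}$ and $g\ge|f|$, so any inequality $g\le L\cdot\eta_{\aaa}$ yields $|f|\le L\cdot\eta_{\aaa}$. Next I record two consequences of $g\in\SO$. (a) For each $n\in\N$, taking $R=1$ and $\varepsilon=1/n$ in the definition gives $M_n\in\N$ with $\diam g([x,x+1])<1/n$ whenever $x\ge M_n$; I may assume $(M_n)$ is strictly increasing. (b) The condition with $R=1$ and arbitrary $\varepsilon>0$ forces $g(x)\le\varepsilon x+C_\varepsilon$ for large $x$, hence $g(x)/x\to 0$ as $x\to\infty$; equivalently, for every fixed $n$ one has $\frac{1}{n}x-g(x)\to +\infty$.

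The core step is to choose the strictly increasing sequence $\aaa=(a_n)\subset\N$ so that $a_n\ge M_n$ and $g(a_n)\le\eta_{\aaa}(a_n)$ for all $n$, proceeding by induction. Here I would use the structure from Definition~\ref{3-6}: $\eta_{\aaa}$ equals $\tau$ on $[0,a_1]$ and is linear with slope $\frac{1}{n}$ on $[a_n,a_{n+1}]$, and the value $v_n:=\eta_{\aaa}(a_n)=\eta_{\aaa}^{n-1}(a_n)$ depends only on $a_1,\dots,a_n$ --- so the invariant never constrains an already-chosen term. For the base case pick $a_1\ge M_1$ with $g(a_1)\le a_1+1=\eta_{\aaa}(a_1)$, possible by (b). Given $a_1<\dots<a_n$, the value $v_n$ is then fixed and $\eta_{\aaa}(a_{n+1})=v_n+\frac{1}{n}(a_{n+1}-a_n)$, so the requirement $g(a_{n+1})\le\eta_{\aaa}(a_{n+1})$ rearranges to $\frac{1}{n}a_{n+1}-g(a_{n+1})\ge\frac{1}{n}a_n-v_n$; since the left-hand side tends to $+\infty$ by (b), I can choose $a_{n+1}\in\N$ satisfying this together with $a_{n+1}>a_n$ and $a_{n+1}\ge M_{n+1}$.

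Finally I would verify $g\le L\cdot\eta_{\aaa}$ with $L:=\max\{2,\ \sup_{[0,a_1]}g\}$, which is finite because $g$ is continuous on the compact interval $[0,a_1]$. On $[0,a_1]$ the bound is immediate from $\eta_{\aaa}\ge\mathbf{1}$. On each $[a_n,a_{n+1}]$ with $n\ge 1$, the condition $a_n\ge M_n$ lets me telescope the unit increments: for $x\in[a_n,a_{n+1}]$ each of the $\lceil x-a_n\rceil$ steps changes $g$ by less than $1/n$, so $g(x)\le g(a_n)+\frac{1}{n}(x-a_n)+\frac{1}{n}$; combining with $g(a_n)\le v_n$ and $\eta_{\aaa}(x)=v_n+\frac{1}{n}(x-a_n)$ gives $g(x)\le\eta_{\aaa}(x)+\frac{1}{n}\le 2\,\eta_{\aaa}(x)$. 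Since $a_n\to\infty$, these intervals exhaust $\HH$, hence $|f|\le g\le L\cdot\eta_{\aaa}$, as required.

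The real content is fact (b) --- that a slowly oscillating function grows slower than every positive linear rate --- which is precisely what makes the greedy step work; I do not expect a genuine obstacle beyond bookkeeping. The points to be careful about are: phrasing the inductive invariant so it depends only on $a_1,\dots,a_n$ (so later choices cannot spoil earlier ones), absorbing the additive $1/n$ rounding error from the telescoping into the factor $2$ (legitimate since $\eta_{\aaa}\ge\mathbf{1}$), and handling the compact segment $[0,a_1]$ by a multiplicative constant rather than a slope estimate.
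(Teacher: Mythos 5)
Your proposal is correct. The greedy construction is sound: the key observation that $v_n=\eta_{\aaa}(a_n)=\eta_{\aaa}^{n-1}(a_n)$ depends only on $a_1,\dots,a_n$ makes the induction well-founded, fact (b) (sublinear growth of slowly oscillating functions, obtained by telescoping the $R=1$ modulus) does guarantee that the anchoring condition $g(a_{n+1})\le v_n+\frac{1}{n}(a_{n+1}-a_n)$ can be met together with $a_{n+1}\ge M_{n+1}$, and the final verification on $[a_n,a_{n+1}]$ (telescoping with increments $<1/n$, absorbing the additive $1/n$ into the factor $2$ via $\eta_{\aaa}\ge\mathbf{1}$) is accurate.

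Your route differs from the paper's in its bookkeeping, though the underlying idea (choose $\aaa$ from the unit-interval oscillation modulus and dominate $|f|$ piece by piece against the piecewise-linear $\eta_{\aaa}$ with slopes $1/n$) is shared. The paper never invokes sublinear growth and imposes no condition relating $f(a_n)$ to $\eta_{\aaa}(a_n)$: it chooses $a_n$ so that $\diam f(B(x,1))<(n+1)^{-4}$ for $x\ge a_n$, takes $L=1+\sup_{[0,a_1]}|f|$, and runs an induction on the strengthened statement $|f(x)|+n^{-2}\le L\cdot\eta_{\aaa}^{n-1}(x)$ for $x\le a_n$, where the fast-decaying thresholds $(n+1)^{-4}$ and the slack term $n^{-2}$ absorb all rounding errors; the multiplicative constant $L$ alone carries the comparison, with no anchoring needed. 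Your version instead uses the weak thresholds $1/n$ but compensates with the greedy condition $g(a_n)\le\eta_{\aaa}(a_n)$, justified by the extra (easy) lemma that $g(x)/x\to 0$. What your approach buys is a cleaner final estimate ($g\le\eta_{\aaa}+\tfrac1n\le 2\eta_{\aaa}$) at the cost of an additional growth lemma and a slightly more delicate selection of $\aaa$; the paper's approach buys a selection of $\aaa$ depending only on the oscillation modulus, at the cost of the more intricate chain of inequalities in its induction. Both are complete proofs of the statement.
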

    
    \begin{proof}
      Since $f\in \SO$, we can take a strictly increasing sequence
       $\aaa=(a_n)\subset\N$
      such that 
      \begin{enumerate}
        \item $\diam f(B(x, 1))<(n+1)^{-4}$ for every $x\geq a_n$.
       % \item $a_{n+1}\geq a_{n}+n(n+1)$.
      \end{enumerate}
  Let $L=1+\sup\{ |f(x)|: x\leq a_1\}$.
  Then we have 
    \[|f(x)|+1 \leq L\leq L\cdot \tau(x) =L\cdot \eta_{\aaa}^{0}(x)\]
      for every $x\leq a_{1}$.
  Suppose that we have shown that
      \begin{enumerate}
        \item[$(2)_{n}$] $|f(x)|+n^{-2}\leq L\cdot \eta_{\aaa}^{n-1}(x)$ for every $x\leq a_{n}$.
      \end{enumerate}
  If $x\leq a_{n}$ then $(2)_{n+1}$ follows from $(2)_n$ since $|f(x)|+(n+1)^{-2}\leq|f(x)|+n^{-2}$
   and $\eta_{\aaa}^{n}(x) =\eta_{\aaa}^{n-1}(x)$.
  Now suppose that $a_{n}\leq x\leq a_{n+1}$.
  Then we have
      \begin{align*}
        |f(x)|+\frac{1}{(n+1)^2}
        &\leq |f(a_{n})|+\frac{x-a_n}{(n+1)^4} +\frac{1}{(n+1)^4}+\frac{1}{(n+1)^2}\qquad (\mbox{by }(1))\\
        &< |f(a_{n})|+\frac{x-a_n}{n+1} +\frac{1}{n^2}\\
        &\leq L\cdot \eta_{\aaa}^{n}(a_{n})+\frac{x-a_n}{n+1} \qquad (\mbox{by }(2)_n )\\
        &\leq L\cdot \eta_{\aaa}^{n-1}(a_{n})+\frac{x-a_n}{n}
        \qquad (\because~\eta_{\aaa}^{n}(a_{n})=\eta_{\aaa}^{n-1}(a_{n}))\\
        &\leq L\cdot \left( \eta_{\aaa}^{n-1}(a_{n})+\frac{x-a_n}{n} \right)
         \qquad (\because~ L\geq 1)\\
        &=L\cdot \eta_{\aaa}^{n}(x).
      \end{align*}
  Thus $(2)_{n+1}$ holds.
  Consequently, we have $|f|\leq L\cdot \eta_{\aaa}$
   since $\displaystyle\lim_{n\to \infty}\eta_{\aaa}^{n} =\eta_{\aaa}$
    and $\displaystyle\lim_{n\to \infty} a_n =\infty$.
    \end{proof}

\begin{proposition}\label{3-8}
      For each strictly increasing sequence $\aaa\subset \N$,
       there exists a strictly increasing sequence $\bbb\subset \N$
       such that
       $\eta_{\aaa}\leq \eta_{\bbb}$ and
      $\displaystyle\lim_{n\to \infty}\eta_{\aaa}(n)^{-1} \cdot \eta_{\bbb}(n)=\infty$.
\end{proposition}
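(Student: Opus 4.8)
First I would record two elementary consequences of Definition~\ref{3-6}. (1)~$\eta_{\aaa}$, and likewise $\eta_{\bbb}$, is the continuous, increasing, piecewise-linear function with value $1$ at $0$ whose right-hand slope equals $1$ on $[0,a_1)$ and $1/n$ on $[a_n,a_{n+1})$ for every $n\ge 1$; in particular every slope of $\eta_{\aaa}$ on $[a_m,\infty)$ is at most $1/m$, so $\eta_{\aaa}(y)-\eta_{\aaa}(x)\le\tfrac1m(y-x)$ whenever $a_m\le x\le y$. (2)~If $b_k\ge a_k$ for all $k$, then $\#\{k:b_k\le x\}\le\#\{k:a_k\le x\}$ for every $x\ge 0$; since the right-hand slope of $\eta_{\aaa}$ at $x$ equals $1/\max\{1,\#\{k:a_k\le x\}\}$ and similarly for $\eta_{\bbb}$, the slope of $\eta_{\bbb}$ dominates that of $\eta_{\aaa}$ pointwise, and hence $\eta_{\aaa}\le\eta_{\bbb}$. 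By (2) the inequality $\eta_{\aaa}\le\eta_{\bbb}$ costs nothing once we force $a_k\le b_k$, so the whole content of the proposition is the divergence of the quotient; for that, $\bbb$ must be built so that $\eta_{\bbb}$ retains a large slope far longer than $\eta_{\aaa}$ does.

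The plan is to choose $\bbb$ recursively. Put $b_1=a_1$, and for $k\ge 2$ pick any $b_k\in\N$ satisfying (a)~$b_k>b_{k-1}$, (b)~$b_k\ge a_{k^2}$, and (c)~$b_k-b_{k-1}\ge (k-1)^2\,\eta_{\aaa}(b_{k-1})$; this is possible since, once $b_1,\dots,b_{k-1}$ are fixed, the right-hand sides of (a)--(c) are determined numbers. Then $\bbb=(b_k)$ is a strictly increasing sequence in $\N$ with $b_k\ge a_{k^2}\ge a_k$, so $\eta_{\aaa}\le\eta_{\bbb}$ by (2). The point of (b) is that beyond $b_k$ the slope of $\eta_{\aaa}$ is at most $1/k^2$ whereas the slope of $\eta_{\bbb}$ on the block $[b_k,b_{k+1})$ is still $1/k$, so on that block the slope of $\eta_{\bbb}$ exceeds that of $\eta_{\aaa}$ by a factor of at least $k\to\infty$; the point of (c) is that the value $\eta_{\aaa}(b_{k-1})$ accumulated so far is negligible next to what $\eta_{\bbb}$ gains along $[b_{k-1},b_k]$.

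The estimate should run as follows. Fix $k\ge 2$. Since the slope of $\eta_{\bbb}$ on $[b_{k-1},b_k)$ is $1/(k-1)$ and $\eta_{\bbb}\ge 1$, we get $\eta_{\bbb}(b_k)\ge\tfrac{1}{k-1}(b_k-b_{k-1})$. As $b_{k-1}\ge a_{(k-1)^2}$, fact~(1) gives $\eta_{\aaa}(b_k)\le\eta_{\aaa}(b_{k-1})+\tfrac{1}{(k-1)^2}(b_k-b_{k-1})$, and by (c) the first summand is at most the second, so $\eta_{\aaa}(b_k)\le\tfrac{2}{(k-1)^2}(b_k-b_{k-1})$; therefore $\eta_{\bbb}(b_k)/\eta_{\aaa}(b_k)\ge (k-1)/2$. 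Now take any $n$ with $b_k\le n<b_{k+1}$ and put $t=n-b_k\ge 0$: then $\eta_{\bbb}(n)=\eta_{\bbb}(b_k)+t/k$ exactly, while $\eta_{\aaa}(n)\le\eta_{\aaa}(b_k)+t/k^2$ by (b) and fact~(1), so
\[\frac{\eta_{\bbb}(n)}{\eta_{\aaa}(n)}\ \geq\ \frac{\eta_{\bbb}(b_k)+t/k}{\eta_{\aaa}(b_k)+t/k^2}.\]
The right-hand side has the form $(P+t/k)/(Q+t/k^2)$ with $P,Q>0$, hence is monotone in $t\ge 0$ and so lies between its value at $t=0$, namely $\eta_{\bbb}(b_k)/\eta_{\aaa}(b_k)\ge (k-1)/2$, and its limit $k$ as $t\to\infty$; thus $\eta_{\bbb}(n)/\eta_{\aaa}(n)\ge (k-1)/2$ for every $n\in[b_k,b_{k+1})$. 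Since $b_k\to\infty$, every large $n$ lies in such a block with $k$ as large as we please, so $\eta_{\aaa}(n)^{-1}\eta_{\bbb}(n)\to\infty$, as required.

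The step I expect to be the genuine obstacle is the control of this ``head start''. Since every $\eta_{(\cdot)}$ has slope at most $1$, a function $\eta_{\aaa}$ arising from a very rapidly growing sequence $\aaa$ is nearly linear over huge intervals, so merely handing $\eta_{\bbb}$ a larger slope than $\eta_{\aaa}$ on a bounded stretch cannot force the quotient to blow up --- in particular no single ``universal'' fast $\eta_{\bbb}$ can beat every $\eta_{\aaa}$, and $\bbb$ must genuinely be tailored to $\aaa$. Conditions (b) and (c) are exactly the two devices that overcome this: (b) drives the per-block slope-advantage factor to infinity, while (c) lengthens each gap $[b_{k-1},b_k]$ enough to absorb the accumulated value $\eta_{\aaa}(b_{k-1})$. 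A smaller but still real point is upgrading the estimate from the breakpoints $b_k$ to all intermediate $n$, which is handled by the monotonicity of the rational function $(P+t/k)/(Q+t/k^2)$ with no further adjustment of $\bbb$.
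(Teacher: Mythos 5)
Your proposal is correct and follows essentially the same route as the paper: you construct $\bbb$ recursively so that each gap $b_k-b_{k-1}$ swamps $(k-1)^2\eta_{\aaa}(b_{k-1})$ and each $b_k$ lies beyond a far-out term of $\aaa$ (the paper uses $a_{(n+1)^3}$ where you use $a_{k^2}$), and then estimates the quotient block by block on $[b_k,b_{k+1})$. The only real difference is cosmetic: the paper establishes $\eta_{\bbb}\geq n\cdot\eta_{\aaa}$ on the $n$-th block by induction, whereas you avoid the induction by bounding both functions at the breakpoints and invoking monotonicity of $(P+t/k)/(Q+t/k^2)$ in $t$.
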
   
    
    \begin{proof}
  Let $\aaa=(a_n)\subset \N$ be a strictly increasing sequence.
  Let $\bbb=(b_n)$ be a strictly increasing sequence such that
  \begin{enumerate}
    \item[(1)] $b_0 =a_1$ and
    \item[(2)] $b_n \geq n^2 \cdot \eta_{\aaa}(b_{n-1})+b_{n-1} +a_{(n+1)^3}$ for each $n\in \N$.
  \end{enumerate}
  We shall show that $\eta_{\bbb}(x)\geq n\cdot \eta_{\aaa}(x)$ for every $x\in [b_{n}, b_{n+1}]$.
  \par
  Since $b_i > a_i$ for $i=1,2$, we have $\eta_{\bbb}(x)\geq 1\cdot\eta_{\aaa}(x)$
      for every $x\in [b_1, b_2]$.
  Suppose that we have shown that
  \begin{enumerate}
    \item[(3)] $\eta_{\bbb}(x)\geq (n-1)\cdot\eta_{\aaa}(x)$
    whenever $x\in [b_{n-1}, b_{n}]$ for $n\geq 2$.
  \end{enumerate}
  Let $x\in [b_n, b_{n+1}]$. We write $x=b_{n-1}+t$, $t> 0$
       for some technical reason.
  Then we have
      \begin{equation*}
        \frac{t}{n^2}>\eta_{\aaa}(b_{n-1}) \tag{4}
      \end{equation*}
  Indeed, since $x=b_{n-1}+t \geq b_n$, we have
  \begin{align*}
    t&\geq b_n -b_{n-1}\\
    &> n^2 \cdot \eta_{\aaa}(b_{n-1})+a_{(n+1)^3}
    \qquad (\mbox{by }(2))\\
    &>n^2 \cdot \eta_{\aaa}(b_{n-1}).
  \end{align*}
  Then we have
      \begin{align*}
        \eta_{\bbb}(x)
        &=\eta^{n}_{\bbb}(x)
        =\eta_{\bbb}^{n-1}(b_n) +\frac{1}{n}(x-b_n)\\
        &=\eta_{\bbb}^{n-2}(b_{n-1})+\frac{1}{n-1}(b_{n}-b_{n-1})+\frac{1}{n}(x-b_n)\\
        &\geq \eta_{\bbb}^{n-2}(b_{n-1})+\frac{1}{n}(b_{n}-b_{n-1}+x-b_n)\\
        &= \eta_{\bbb}^{n-2}(b_{n-1})+\frac{1}{n}(x-b_{n-1})\\
        &=\eta^{n-2}_{\bbb}(b_{n-1})+\frac{t}{n}\\
        &=\eta_{\bbb}(b_{n-1})+\frac{t}{n}\\
        &\geq (n-1)\cdot \eta_{\aaa}(b_{n-1})+\frac{t}{n}.\tag{5}
      \end{align*}
  The last inequality follows from $(3)$.
      Since $b_{n-1} > a_{n^3}$, there exists $k\geq n^3$ such that
      $a_{k} \leq b_{n-1}<a_{k+1}$.
      Then we have
      \begin{align*}
        \eta_{\aaa}(x)
        &\leq \eta^{k}_{\aaa}(x)
        =\eta^{k-1}_{\aaa}(a_k) +\frac{1}{k}(x-a_{k})\\
        &=\eta^{k-1}_{\aaa}(a_k) + \frac{1}{k}(b_{n-1}-a_{k})+\frac{1}{k}(x-b_{n-1})\\
        &=\eta^{k}_{\aaa}(b_{n-1})+\frac{1}{k}(x-b_{n-1})\\
        &=\eta^{k}_{\aaa}(b_{n-1})+\frac{t}{k}\\
        &\leq \eta_{\aaa}(b_{n-1})+\frac{t}{n^3}.\tag{6}
      \end{align*}
  Hence, we have
  \begin{align*}
    \eta_{\bbb}(x)-n\cdot \eta_{\aaa}(x)
    &\geq (n-1)\cdot \eta_{\aaa}(b_{n-1})+\frac{t}{n} -n\cdot \eta_{\aaa}(x) \qquad (\mbox{by }(5))\\
    &\geq (n-1)\cdot \eta_{\aaa}(b_{n-1})+\frac{t}{n} 
    -n\cdot \left(\eta_{\aaa}(b_{n-1})+\frac{t}{n^3}\right)
    \qquad  (\mbox{by }(6))\\
    &=(n-1)\cdot\frac{t}{n^2}-\eta_{\aaa}(b_{n-1})\\
    &> (n-1)\cdot \eta_{\aaa}(b_{n-1})-\eta_{\aaa}(b_{n-1}) \qquad  (\mbox{by }(4))\\
    &=(n-2)\cdot \eta_{\aaa}(b_{n-1})\\
    &\geq 0.
  \end{align*}
  Thus we conclude that
  \[\lim_{n\to\infty}\frac{\eta_{\bbb}(n)}{\eta_{\aaa}(n)}
  \geq \lim_{n\to\infty}n =\infty.\]
  \end{proof}

  \begin{proposition}\label{3-9}
    If $\phi\in H(\SO)$ satisfies $\phi(\mathbf{1})=0$
    then $\phi =\mathbf{0}$.
  \end{proposition}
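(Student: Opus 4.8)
The plan is to chain together the three results immediately preceding the statement. Let $\phi \in H(\SO)$ satisfy $\phi(\mathbf{1}) = 0$, and fix an arbitrary $f \in \SO$; it is enough to prove $\phi(f) = 0$. First I would apply Proposition \ref{3-7} to obtain a strictly increasing sequence $\aaa \subset \N$ and a constant $L > 0$ with $|f| \le L\cdot \eta_{\aaa}$. Then Proposition \ref{3-8}, applied to $\aaa$, produces a strictly increasing sequence $\bbb \subset \N$ such that $\eta_{\aaa} \le \eta_{\bbb}$ and $\lim_{n\to\infty} \eta_{\aaa}(n)^{-1}\eta_{\bbb}(n) = \infty$.

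Now $\eta_{\aaa}$ and $\eta_{\bbb}$ both belong to $\SO$ and satisfy $\mathbf{1} \le \eta_{\aaa} \le \eta_{\bbb}$, so the Vanishing Criterion (Lemma \ref{3-5}) applies with $\eta_{\aaa}$ and $\eta_{\bbb}$ playing the roles of $f$ and $g$; together with $\phi(\mathbf{1}) = 0$ it yields $\phi(\eta_{\aaa}) = 0$. Finally I would push this down to the original $f$ by positivity: since $L\cdot\eta_{\aaa} - |f| \ge \mathbf{0}$ and $L\cdot\eta_{\aaa}-|f| \in \SO$, property (v) gives $L\cdot\phi(\eta_{\aaa}) - \phi(|f|) \ge 0$, and combining this with property (iii) and $\phi(\eta_{\aaa}) = 0$ gives $|\phi(f)| = \phi(|f|) \le L\cdot\phi(\eta_{\aaa}) = 0$. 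Hence $\phi(f) = 0$, and since $f \in \SO$ was arbitrary, $\phi = \mathbf{0}$.

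The substantive work has already been carried out in Propositions \ref{3-7} and \ref{3-8} and in Lemma \ref{3-5}; what remains is essentially bookkeeping, and there is no real obstacle. The only points that require attention are that every function to which $\phi$ is applied — namely $|f|$, $\eta_{\aaa}$, $\eta_{\bbb}$, and $L\cdot\eta_{\aaa}-|f|$ — genuinely lies in the lattice $\SO$ (so that $\phi$ may legitimately be evaluated on it), and that the pointwise domination $|f| \le L\cdot\eta_{\aaa}$ is correctly transferred to the numerical inequality $|\phi(f)| \le L\cdot\phi(\eta_{\aaa})$ by means of the order-preservation and absolute-value properties (iii)–(v) of a homomorphism.
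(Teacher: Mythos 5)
Your proposal is correct and follows exactly the paper's own argument: Proposition \ref{3-7} to dominate $|f|$ by $L\cdot\eta_{\aaa}$, Proposition \ref{3-8} to produce $\eta_{\bbb}$ witnessing the hypothesis of the Vanishing Criterion, Lemma \ref{3-5} to get $\phi(\eta_{\aaa})=0$, and then positivity together with $|\phi(f)|=\phi(|f|)$ to conclude $\phi(f)=0$. The only difference is that you spell out the monotonicity step via property (v) applied to $L\cdot\eta_{\aaa}-|f|$, which the paper leaves implicit.
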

  
  \begin{proof}
    Let $f\in \SO$.
    Suppose that $\phi(\mathbf{1})=0$.
    By Proposition \ref{3-7}, there exists a strictly increasing sequence $\aaa\in \N$ and $L>0$
    such that $|f|\leq L\cdot \eta_{\aaa}$.
  Recall that $\eta_{\aaa}\geq \mathbf{1}$.
  By Proposition \ref{3-8},
   there exists a strictly increasing sequence $\bbb\in \N$ so that
   $\mathbf{1} \leq \eta_{\aaa}\leq \eta_{\bbb}$ and
  \[\lim_{n\to \infty}\frac{\eta_{\bbb}(n)}{\eta_{\aaa}(n)}=\infty.\]
  Thus, 
  the condition $\phi(\mathbf{1})=0$
    implies that $\phi (\eta_{\aaa})=0$ by the Vanishing Criterion (Lemma \ref{3-5}).
    Hence, we have
    \[|\phi(f)|=\phi (|f|)\leq L\cdot \phi(\eta_{\aaa})=0,\]
    i.e., $\phi(f)=0$.
  Since $f$ can be taken arbitrary,
   $\phi $ must be the zero-homomorphism.
  \end{proof}

  By Proposition \ref{3-9}, 
  it follows that the structure of $H(\SO)$ is very simple
  in contrast to the case of uniformly continuous functions \cite{CS} (see also \cite{FJCS}).

  \begin{theorem}\label{3-10}
  The space $H(\SO)$ of all homomorphisms of the vector lattice of all slowly oscillating functions on the half-line $\HH$ is homeomorphic to the space $\left(\HH\times(0,\infty)\right)\cup \{\mathbf{0}\}$ 
   where a neighborhood base of the point $\mathbf{0}$ consists of sets of the form:
   \[\{(x,y)\in \HH\times(0,\infty) : y\leq \varepsilon \cdot\eta_{\aaa}(x)^{-1}\}\cup \{\mathbf{0}\}\]
   for some $\varepsilon >0$ and the slowly oscillating function $\eta_{\aaa}$ with respect to
   some strictly increasing sequence $\aaa$.
  \end{theorem}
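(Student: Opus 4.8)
The plan is to build an explicit bijection $\Psi\colon H(\SO)\to\left(\HH\times(0,\infty)\right)\cup\{\mathbf 0\}$ and to check that it transports the topology of $H(\SO)$ to the one described in the statement. Every $\phi\in H(\SO)$ has $\phi(\mathbf 1)\geq 0$ by positivity (property (v)), so either $\phi(\mathbf 1)>0$, in which case Corollary \ref{3-2} furnishes a unique $x_\phi\in\HH$ with $\phi=\phi(\mathbf 1)\cdot\delta_{x_\phi}$, or $\phi(\mathbf 1)=0$, in which case $\phi=\mathbf 0$ by Proposition \ref{3-9}. Since conversely $c\cdot\delta_x\in H(\SO)$ for all $x\in\HH$, $c>0$, and $\mathbf 0\in H(\SO)$, setting $\Psi(\phi)=(x_\phi,\phi(\mathbf 1))$ when $\phi(\mathbf 1)>0$ and $\Psi(\mathbf 0)=\mathbf 0$ defines a bijection, with inverse $(x,c)\mapsto c\cdot\delta_x$, $\mathbf 0\mapsto\mathbf 0$.

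Step one is to see that $\Psi$ restricts to a homeomorphism of $H(\SO)\setminus\{\mathbf 0\}$ onto $\HH\times(0,\infty)$; both are open in their ambient spaces, since $H(\SO)$ is Hausdorff as a subspace of $\R^{\SO}$ and $\HH\times(0,\infty)$ is open in the target by construction. For continuity of $\Psi$ one uses that $\phi\mapsto\phi(\mathbf 1)$ is continuous and that, applying $\phi=\phi(\mathbf 1)\cdot\delta_{x_\phi}$ to $\sqrt{\tau}\in\SO$, one gets $x_\phi=\bigl(\phi(\sqrt{\tau})/\phi(\mathbf 1)\bigr)^{2}-1$, which is a continuous function of $\phi$ wherever $\phi(\mathbf 1)>0$. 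For continuity of $\Psi^{-1}$ one needs only that $(x,c)\mapsto(c\cdot\delta_x)(f)=c\cdot f(x)$ is continuous for each $f\in\SO$, which holds since $f\in C(\HH)$.

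Step two is to exhibit a neighborhood base of $\mathbf 0$ in $H(\SO)$. Given a basic neighborhood $V(\mathbf 0;f_1,\dots,f_n;\varepsilon)$, the identities $\varphi(|f_i|)=|\varphi(f_i)|$ and $\varphi\bigl(\bigvee_i|f_i|\bigr)=\bigvee_i|\varphi(f_i)|$ show it equals $\{\varphi\in H(\SO):\varphi(h)<\varepsilon\}$ where $h=\bigvee_{i=1}^{n}|f_i|\in\SO$, $h\geq\mathbf 0$. By Proposition \ref{3-7} there are a strictly increasing $\aaa\subset\N$ and $L>0$ with $h\leq L\cdot\eta_{\aaa}$, and since homomorphisms are positive, $\varphi(\eta_{\aaa})<\varepsilon/L$ forces $\varphi(h)\leq L\cdot\varphi(\eta_{\aaa})<\varepsilon$; hence $V(\mathbf 0;\eta_{\aaa};\varepsilon/L)\subseteq V(\mathbf 0;f_1,\dots,f_n;\varepsilon)$. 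As each $V(\mathbf 0;\eta_{\aaa};\delta)$ is itself a basic neighborhood of $\mathbf 0$, the family $\{V(\mathbf 0;\eta_{\aaa};\delta):\aaa\text{ strictly increasing},\ \delta>0\}$ is a neighborhood base at $\mathbf 0$.

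Step three is to push this base through $\Psi$: since $(c\cdot\delta_x)(\eta_{\aaa})<\delta$ is equivalent to $c<\delta\cdot\eta_{\aaa}(x)^{-1}$, we get $\Psi\bigl(V(\mathbf 0;\eta_{\aaa};\delta)\bigr)=\{(x,y)\in\HH\times(0,\infty):y<\delta\cdot\eta_{\aaa}(x)^{-1}\}\cup\{\mathbf 0\}$, and the family of these sets and the family of sets $\{(x,y):y\leq\varepsilon\cdot\eta_{\aaa}(x)^{-1}\}\cup\{\mathbf 0\}$ generate the same filter. Thus $\Psi$ is a bijection that is a homeomorphism on $H(\SO)\setminus\{\mathbf 0\}$ and carries a neighborhood base of $\mathbf 0$ onto a neighborhood base of $\mathbf 0$, hence a homeomorphism, which is the assertion. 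I expect the only real work to lie in step two---the passage from an arbitrary basic neighborhood of $\mathbf 0$ to the one-function form $V(\mathbf 0;\eta_{\aaa};\delta)$, combining the lattice identities (i) and (iii) with the domination bound of Proposition \ref{3-7} and positivity of homomorphisms; steps one and three are routine.
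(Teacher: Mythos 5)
Your proposal is correct and follows essentially the same route as the paper: the bijection $\phi\mapsto(x_{\phi},\phi(\mathbf{1}))$ built from Proposition \ref{3-9} and Corollary \ref{3-2}, a homeomorphism check away from $\mathbf{0}$, and the reduction of neighborhoods of $\mathbf{0}$ to sets $V(\mathbf{0};\eta_{\aaa};\delta)$ via the domination bound of Proposition \ref{3-7}. Your verifications are only streamlined versions of the paper's (continuity of $\Psi^{\pm 1}$ via coordinate evaluations in the pointwise topology instead of explicit $\varepsilon$-estimates, and the join $\bigvee_i|f_i|$ inside $\SO$ in place of the paper's dominating sequence $\ccc$ for finite intersections), so no genuinely different ideas are involved and the argument stands.
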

  
  \begin{proof}
  By Proposition \ref{3-9}, every non-zero homomorphism $\phi\in H(\SO)$
   satisfies $\phi(\mathbf{1})>0$ since $\phi$ is positive.
  Hence, every $\phi\in H(\SO)\setminus \{\mathbf{0}\}$ is uniquely expressed as
    $\phi =\phi(\mathbf{1})\cdot \delta_{x_{\phi}}$ for some $x_{\phi}\in \HH$ by Corollary \ref{3-2}.
  Therefore, the function $\Phi: H(\SO)\setminus \{\mathbf{0}\} \to \HH\times (0, \infty)$ defined by
  \[\Phi (\phi)=(x_{\phi}, \phi(\mathbf{1}))\]
   is a well-defined bijection.
  \par
  The function $\Phi$ is continuous.
  To see this, fix $\phi \in H(\SO)$ and let $\varepsilon>0$.
  We consider following two neighborhoods of $\phi$:
  \begin{align*}
    V(\phi; \mathbf{1}; \varepsilon_1)
    &=\{\varphi \in H(\SO) : |\varphi (\mathbf{1})-\phi(\mathbf{1})|<\varepsilon_1\},\\
    V(\phi; \tau ; \varepsilon_2)
    &=\{\varphi \in H(\SO) : |\varphi (\tau)-\phi(\tau)|<\varepsilon_2\},
    \end{align*}
    where
  \begin{enumerate}
    \item[(1)] $\varepsilon_1 
    <\min\left\{ \varepsilon/2, \frac{\phi(\mathbf{1})\varepsilon}{2(x_{\phi}+1)}\right\}$ and
    \item[(2)] $\varepsilon_2 <\frac{\phi(\mathbf{1})\varepsilon}{2}-\varepsilon_1 (x_{\phi}+1)$. 
  \end{enumerate}
  Then for each $\varphi \in V(\phi; \mathbf{1}; \varepsilon_1)\cap V(\phi; \tau ; \varepsilon_2)$,
   we have
   \begin{align*}
    \phi(\mathbf{1})|x_{\phi} -x_{\varphi}|
    &=|\phi (\mathbf{1}) (x_{\phi}+1)-\phi(\mathbf{1})(x_{\varphi}+1)|\\
    &\leq |\phi (\mathbf{1}) (x_{\phi}+1)-\varphi(\mathbf{1})(x_{\phi}+1)|\\
    & \qquad + |\varphi (\mathbf{1}) (x_{\phi}+1)-\phi(\mathbf{1})(x_{\varphi}+1)|\\
    &=|\phi (\mathbf{1}) - \varphi (\mathbf{1})|(x_{\varphi}+1)
    +|\varphi(\tau) -\phi(\tau)|\\
    &<\varepsilon_1 (x_{\varphi}+1) + \varepsilon_2.
   \end{align*}
   By (2), we have
  \[  |x_{\phi} -x_{\varphi}|
  < \frac{\varepsilon_1 (x_{\phi}+1)+\varepsilon_2}{\phi(\mathbf{1})} <\varepsilon/2.\]
  Thus we have
  \begin{align*}
    d(\Phi(\phi), \Phi(\varphi))
    &\leq d((x_{\phi}, \phi(\mathbf{1})), (x_{\phi}, \varphi(\mathbf{1})))
    +d((x_{\phi}, \varphi(\mathbf{1})), (x_{\varphi}, \varphi(\mathbf{1})))\\
    &=|\varphi (\mathbf{1})-\phi(\mathbf{1})|+|x_{\varphi} -x_{\phi}|\\
    &<\varepsilon/2 + \varepsilon/2 =\varepsilon.
  \end{align*}
  \par
  Next we shall show that
   $\Phi^{-1}: \HH\times (0, \infty)\ni (x,s)\mapsto s\cdot \delta_x \in H(\SO)\setminus \{\mathbf{0}\}$ is continuous.
  Given $(x,s)\in \HH\times (0,\infty)$ and $\varepsilon >0$,
   let 
   $f\in \SO$ and 
   consider a basic neighborhood
   \[V(s\cdot\delta_x ; f; \varepsilon)
   =\{\varphi \in H(\SO) : |\varphi(f) -s\cdot f(x)|<\varepsilon\}
   \]
    of $\Phi^{-1}(x,s)=s\cdot\delta_x$.
  We take $\lambda_1 >0,$ $\lambda_2 >0$ and $\lambda_0 >0$ so that
  \begin{enumerate}
    \item[(3)] $\lambda_1 \cdot |f(x)|<\varepsilon /2$,
    \item[(4)] $ (s +\lambda_1)\cdot \lambda_2 <\varepsilon /2$,
    \item[(5)] $\lambda_0 <\min \{\lambda_1, \lambda_2\}$ and 
    \item[(6)] $|f(x)-f(y)|<\lambda_2$ whenever $|x-y|<\lambda_0$.
  \end{enumerate}
  Suppose that $(y,t)\in \HH\times (0,\infty)$ satisfies $d((x,s), (y,t))<\lambda_0$.
  Then $|x-y|<\lambda_0$ and $|s-t|<\lambda_0 \leq \lambda_1$,
  in particular, $t\leq s+\lambda_1$.
  Hence, we have
  \begin{align*}
    |\Phi^{-1}(x,s)(f)-\Phi^{-1}(y,t)(f)|
    &=|s\cdot f(x) -t\cdot f(y)|\\
    &\leq |s\cdot f(x) -t\cdot f(x)|+|t\cdot f(x) -t\cdot f(y)|\\
    &=|s-t|\cdot |f(x)|+t\cdot |f(x)-f(y)|\\
    &\leq \lambda_1 \cdot |f(x)| + (s +\lambda_1)\cdot \lambda_2\\
    &\leq \varepsilon /2 + \varepsilon /2 =\varepsilon.
  \end{align*}
  Therefore, $\Phi^{-1}(y,t) \in V(s\cdot\delta_x ; f; \varepsilon)$.
  Consequently, $\Phi$ is a homeomorphism.
  \par
  Finally, we shall consider neighborhoods of $\mathbf{0}\in H(\SO)$.
  We can take a subbase of neighborhoods of $\mathbf{0}$ in $H(\SO)$
   as a family which consists of sets of the form:
  \begin{align*}
    V(\mathbf{0}; f; \varepsilon)
    &=\{ \varphi\in H(\SO): |\varphi(f)-\mathbf{0}(f)|<\varepsilon \}\\
    &=\{ \varphi\in H(\SO): |\varphi(\mathbf{1})\cdot\delta_{x_{\varphi}}(f)|<\varepsilon \}\\
    &=\{ \varphi\in H(\SO): |\varphi(\mathbf{1})\cdot f(x_{\varphi})|<\varepsilon \}
  \end{align*}
   for some $f\in \SO$ and $\varepsilon >0$.
  
  Let $f\in \SO$ and $\varepsilon >0$.
  By Proposition \ref{3-7}, there exists $L>0$ and a strictly increasing sequence $\aaa$ such that
   $|f|\leq L\cdot \eta_{\aaa}$.
  So, if $\varphi\in V(\mathbf{0}; L\cdot\eta_{\aaa}; \varepsilon) $
   then $|\varphi(\mathbf{1})\cdot f(x_{\varphi})|<|\varphi(\mathbf{1})\cdot L\cdot\eta_{\aaa}(x_{\varphi})| <\varepsilon$,
   that is,
  \[V(\mathbf{0}; L\cdot\eta_{\aaa}; \varepsilon) \subset V(\mathbf{0}; f; \varepsilon).\]
  Since $V(\mathbf{0}; L\cdot\eta_{\aaa}; \varepsilon)
  =V(\mathbf{0}; \eta_{\aaa}; \varepsilon\cdot L^{-1})$,
  we can take a subbase of neighborhoods of $\mathbf{0}$ in $H(\SO)$
   as a family which consists of sets of the form:
   \[V(\mathbf{0}; \eta_{\aaa}; \varepsilon) 
  =\{ \varphi\in H(\SO): \varphi(\mathbf{1}) \cdot\eta_{\aaa}(x_{\varphi}) <\varepsilon\}\]
  for some $\varepsilon >0$ and a strictly increasing sequence $\aaa\subset \N$.
  Note that for any two strictly increasing sequences $\aaa=(a_n)$ and $\bbb =(b_n)$,
   if we take a strictly increasing sequence $\ccc =(c_n)$ such that $c_n \geq\max \{a_n, b_n\}$
   then we have $\eta_{\ccc}\geq \eta_{\aaa}\vee \eta_{\bbb}$, i.e.,
  \[V(\mathbf{0}; \eta_{\ccc}; \varepsilon) 
  \subset V(\mathbf{0}; \eta_{\aaa}, \eta_{\bbb}; \varepsilon) .\] 
  Thus, we can take a base of neighborhoods of $\mathbf{0}$ in $H(\SO)$
   as a family which consists of sets of the form $V(\mathbf{0}; \eta_{\aaa}; \varepsilon) $
   for some $\varepsilon >0$ and a strictly increasing sequence $\aaa\subset \N$.
  Consequently, we can take a base of neighborhoods of $\mathbf{0}$ in $\HH \times (0,\infty)\cup \{\mathbf{0}\}$
  as a family which consists of sets of the form
  \begin{align*}
    \Phi (V(\mathbf{0}; \eta_{\aaa}; \varepsilon))\cup \{\mathbf{0}\}
    &=\{(x_{\varphi}, \varphi(\mathbf{1}) )\in \HH\times(0,\infty)
    : \varphi(\mathbf{1}) \cdot\eta_{\aaa} (x_{\varphi})<\varepsilon \} \cup \{\mathbf{0}\}\\
    &=\{(x,y)\in \HH\times(0,\infty) : y\leq \varepsilon \cdot\eta_{\aaa}(x)^{-1}\}\cup \{\mathbf{0}\}
  \end{align*}
  for some $\varepsilon >0$ and an increasing sequence $\aaa\subset \N$.
  \end{proof}

  \medskip
  \begin{center}
  \textsc{Acknowledgement}
  \end{center}
  \par
  I wish to express my gratitude to the referee for the careful reading of the manuscript and valuable suggestions.


\begin{thebibliography}{99}

  \bibitem{CS}
  F. Cabello S\'anchez, 
  {\it Fine structure of the homomorphisms of the lattice of uniformly continuous functions on the line}, 
  Positivity 24 (2020), no. 2, 415--426.
  
  \bibitem{FJCS}
  F. Cabello S\'anchez and J. Cabello S\'anchez, 
  {\it Quiz your maths: Do the uniformly continuous functions on the line form a ring?}, 
  Proceedings of the American Mathematical Society 147, Issue 10 (2019), 4301--4313.

\bibitem{Eng}
R. Engelking, 
{\it General topology},
Second edition. Sigma Series in Pure Mathematics, 6. Heldermann Verlag, Berlin, 1989. 

\bibitem{Garrido-Jaramillo}
M. I. Garrido and  J. A. Jaramillo,
 {\it Homomorphisms on function lattices},
 Monatsh. Math. 141 (2004), no. 2, 127--146.

 \bibitem{Iwa2}
 Y. Iwamoto,
 {\it Indecomposable continua as Higson coronae},
 Topology Appl. 283 (2020), 107334, 16 pp.

\bibitem{Keesling}
J. Keesling,
{\it The one-dimensional \v{C}ech cohomology of the Higson compactification and its corona},
Topology Proc. 19 (1994), 129--148.

\bibitem{PW}
J. R. Porter and  R. G. Woods,
{\it Extensions and absolutes of Hausdorff spaces}, 
Springer-Verlag, New York, 1988.

\bibitem{Roe}
J. Roe,  
{\it Lectures on coarse geometry}, 
University Lecture Series, 31. American Mathematical Society, Providence, RI, 2003. 

\bibitem{Woods}
R. G. Woods, 
{\it The minimum uniform compactification of a metric spaces},
Fund. Math. 147(1995), no. 1, 39--59.

\end{thebibliography}
\end{document}